\newtheorem{lem}{Lemma}[section]
\newtheorem{thm}{Theorem}[section]
\newtheorem{rmk}{Remark}[section]
\journal{XXX}
\begin{document}
\begin{frontmatter}
\title{ Asymptotic stability  of many numerical schemes for phase-field modeling}

\tnotetext[label1]{The research of Dongling Wang is supported in part by the National Natural Science Foundation of China under grants 12271463 and Outstanding youth fund of department of education of Hunan province under grants 22B0173.
 \\ Declarations of interest: none.
}

\author[XTU]{Pansheng Li}
\ead{lpsmath@smail.xtu.edu.cn;}
\author[XTU]{Dongling Wang\corref{mycorrespondingauthor}}
\ead{wdymath@xtu.edu.cn; ORCID 0000-0001-8509-2837}
\cortext[mycorrespondingauthor]{Corresponding author. }

\address[XTU]{Hunan Key Laboratory for Computation and Simulation in Science and Engineering, School of Mathematics and Computational Science, Xiangtan University, Xiangtan, Hunan 411105, China}

\begin{abstract}
The numerical stability of nonlinear equations has been a long-standing concern and there is no standard framework for analyzing long-term qualitative behavior. In the recent breakthrough work \cite{xu2023lack}, a rigorous numerical analysis was conducted on the numerical solution of a scalar ODE containing a cubic polynomial derived from the Allen-Cahn equation. It was found that only the implicit Euler method converge to the correct steady state for any given initial value $u_0$ under the unique solvability and energy stability. But all the other commonly used second-order numerical schemes exhibit sensitivity to initial conditions and may converge to an incorrect equilibrium state as $t_n\to\infty$. This indicates that energy stability may not be decisive for the long-term qualitative correctness of numerical solutions.

We found that using another fundamental property of the solution, namely monotonicity instead of energy stability, is sufficient to ensure that many common numerical schemes converge to the correct equilibrium state.
This leads us to introduce the critical step size constant $h^*=h^*(u_0,\epsilon)$ that ensures the monotonicity and unique solvability of the numerical solutions, where the scaling parameter $\epsilon \in(0,1)$.
For a given numerical method, if the initial value $u_0$ is given, no matter how large it is, we prove that $h^*>0$. As long as the actual simulation step $0<h<h^*$, the numerical solution preserves monotonicity and converges to the correct equilibrium state. On the other hand, we prove that the implicit Euler scheme $h^*=h^*(\epsilon)$, which is independent of $u_0$ and only depends on $\epsilon$. Hence regardless of the initial value taken, the simulation can be guaranteed to be correct when $h<h^*$. But for various other numerical methods,  no mater how small the step size $h$ is in advance, there will always be initial values that cause simulation errors. In fact, for these numerical methods, we prove that  $\inf_{u_0\in \mathbb{R}}h^*(u_0,\epsilon)=0$. Various numerical experiments are used to confirm the theoretical analysis.

\end{abstract}

\begin{keyword}
Phase-field modeling, Existence and uniqueness, Upper bound, Monotonicity, Steady state solution.
\end{keyword}

\end{frontmatter}

\section{introduction}

The phase-field model is an essential tool for describing phase transitions and interface dynamics in materials science. The origins of this model can be traced back to the pioneering work of van der Waals \cite{van1979thermodynamic}. Among the various phase-field models, the Allen-Cahn equation, Cahn-Hilliard equation and the Molecular Beam Epitaxy (MBE) equation are some of the most commonly used.
In this work, we will focus specifically on one of the most widely studied phase-field models: the Allen-Cahn equation \cite{allen1979microscopic}
\begin{equation}\label{A1}
\begin{aligned}
u_t-\Delta u+\frac{1}{\epsilon^2} f(u) & =0 \quad \text { in } \Omega_T=\Omega \times(0, T), \\
\frac{\partial u}{\partial n} & =0 \quad \text { on } \partial \Omega_T=\partial \Omega \times(0, T), \\
\left.u\right|_{t=0} & =u_0.
\end{aligned}
\end{equation}
It is well-known that this model can be viewed as the $L^2$ gradient flow of the free
energy functional
\begin{equation}
E(u)=\int_\Omega\frac12|\nabla u|^2+\frac1{\epsilon^2}F(u)dx
\end{equation}
and satisfies the energy dissipation law:
%%%
%\mg{
\begin{equation}\label{eq:energy}
\frac d{dt}E(u(t))=-\int_\Omega|-\Delta u+\frac1{\epsilon^2}f(u)|^2dx=-\int_\Omega|u_t|^2dx \leq0,
\end{equation}
%}
where $f(u)=u^3-u=F^{\prime}(u)$ and $F(u)=\frac14(u^2-1)^2$, the scaling coefficient $\epsilon \in(0,1)$ represents the interfacial width.

The numerical simulation of phase field equations has a wide range of practical applications. However, designing efficient and stable numerical schemes has always been a significant challenge, primarily due to the following reasons:
(1) Numerical algorithms need to accurately capture the dynamic information of phase transitions while ensuring system stability during long-term simulations;
(2) Phase field models should satisfy specific physical properties, such as mass conservation and energy dissipation;
(3) The strong nonlinearity present in phase field problems makes constructing efficient high-order methods particularly challenging.

In the early studies of numerical simulations of phase field equations, fully explicit and implicit methods dominated. Fully explicit methods include, for example, the Explicit Euler scheme \cite{jeong2016comparison,feng2003numerical} and higher-order explicit Runge-Kutta methods \cite{zhang2021numerical}. Fully implicit methods include, for example, the Implicit Euler scheme \cite{jeong2016comparison}, Crank-Nicolson \cite{crank1947practical,hou2017numerical,zhang2009numerical}, the modified Crank-Nicolson  scheme \cite{condette2011spectral,du1991numerical,shen2010numerical}, and Diagonally Implicit Runge-Kutta methods \cite{zhang2021preserving}. However, both fully explicit and fully implicit methods are inefficient when solving practical problems, especially for long-term evolution.

To address these challenges, partially implicit schemes were developed based on the principle of energy stability.
That means similar to continuous energy dissipation \eqref{eq:energy}, the discrete energy does not increase, i.e., $E(u_{n+1})\leq E(u_{n})$ for the numerical solutions $\{u_{n}\}_{n=0}^{\infty}$.
These include, for example, the convex splitting scheme \cite{eyre1998unconditionally,graser2013time,guan2014second,guillen2014second}, stabilized methods \cite{feng2013stabilized,he2007large,xu2006stability}, the invariant energy quadratization method \cite{yang2016linear} and the scalar auxiliary variable method \cite{shen2019new,shen2018scalar}. These are all methods that ensure either energy stability or modified energy stability $\widetilde{E}(u_{n+1})\leq \widetilde{E}(u_{n})$.
%, where $\widetilde{E}_{n}=E(u_n)+m(u_{n},u_{n-1})$, $m(u_{n},u_{n-1})$ is modified term). 

However, recent literature \cite{xu2019stability} indicates that these partially implicit schemes may achieve energy stability at the cost of sacrificing accuracy.
Since the numerical stability of nonlinear equations has been a long-standing concern and lack of a general theoretical analysis framework, especially for long-term qualitative behavior. The recent literature \cite{xu2023lack} has discovered some novel stability phenomena while examining the follow scalar nonlinear ODE associated with the Allen-Cahn equation

\begin{equation}\label{1.1}
u_t+\frac1{\epsilon^2}f(u)=0 \text { with initial value } u(0)=u_0.
\end{equation}
The ODE $(\ref{1.1})$ can be solved exactly  \cite{stuart1998dynamical}
\begin{equation}
u(t)=\frac{u_0}{\sqrt{e^{-\frac2{\epsilon^2}t}+u_0^2\left(1-e^{-\frac2{\epsilon^2}t}\right)}}.
\end{equation}
\begin{figure}[H]
	\centering
	\subfloat{\label{Fig-11}
		\centering
		\includegraphics[width=0.4\textwidth]{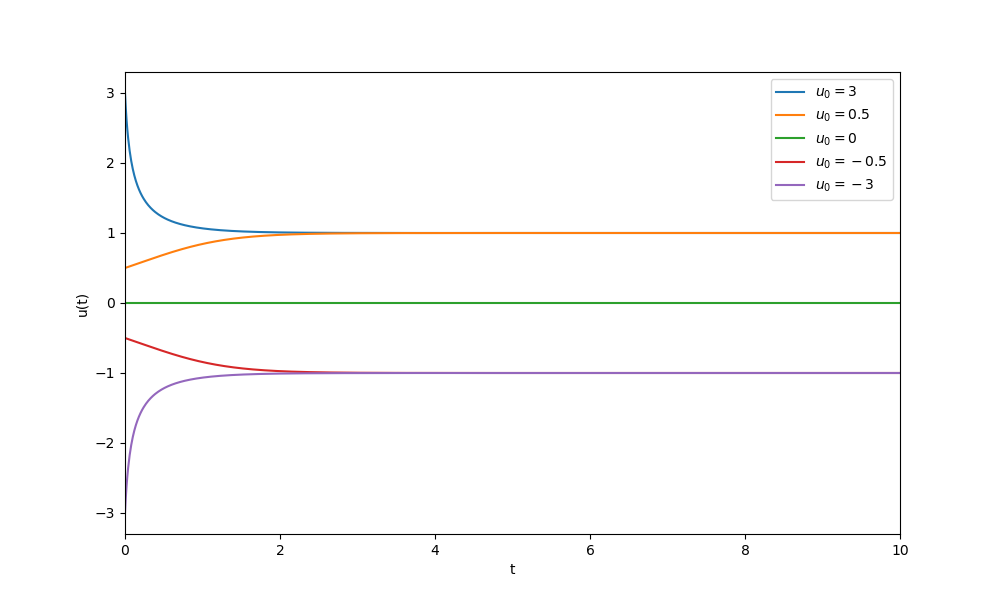}}
	\hspace{0.1cm}
	\caption{Exact solution $u(t)$ for different initial values}
    \label{FigureA}
\end{figure}

As $t\rightarrow +\infty$, the solution
exponentially converges to sign$(u_0)$,
which corresponds to three steady state solutions $\pm 1$ and $0$, depending on the sign of $u_0$, as shown in the Figure \ref{FigureA}. Following \cite{xu2023lack}, we say a numerical solution $u_n$ generated by some given numerical schemes approximating $u(t_n)$ at $t_n=nh$ with step size $h>0$, converges to the correct steady state solution of
\eqref{1.1} if
\begin{equation}\label{eq:xx}
\lim_{n\to\infty}u_n=\mathrm{sign}(u_0).
\end{equation}

Similar to \eqref{eq:energy}, the following energy law holds for the ODE \eqref{1.1}
\begin{equation}\label{eq:ode-energy}
\frac d{dt}E(u(t))=-|u_t|^2 \leq0,
\end{equation}
where $E(u)=\frac{1}{\epsilon^{2}}F(u).$  From now on, we will always use this definition for the
energy $E(\cdot)$ when discussing numerical schemes for this ODE model \eqref{eq:energy}.
As pointed out in \cite{xu2023lack},  any solution of the ODE $(\ref{1.1})$ is also a solution of the Allan-Chan equation $(\ref{A1})$.
As a result, any dynamical issues of a numerical scheme for the ODE $(\ref{1.1})$, such
as stability, accuracy and convergence to the correct steady state solution, should
also exist in general for the PDE model $(\ref{A1})$.

In this paper, we consider two class of the most commonly used numerical schemes for
phase-field models, namely the first-order and second-order
schemes. The first-order schemes include the explicit and implicit Euler schemes while the
second-order schemes include the Crank-Nicolson  scheme, the modified Crank-Nicolson scheme, the convex splitting scheme based on the modified Crank-Nicolson scheme and the implicit midpoint scheme \cite{wang2016implicit, ascher1999midpoint}.  It is well-known that the explicit Euler method is conditionally stable and often necessitates very small time steps, particularly for stiff problems. On the other hand, the implicit Euler, Crank-Nicolson, and implicit midpoint schemes are unconditionally stable for linear test equation and are well-suited for dealing with stiff problems \cite{Hairer}. 
However, they encounter the challenge of having to solve a nonlinear system at each time step, where the existence and uniqueness of the solution are concerns.
Recent literature \cite{ham2023stability} indicates that when employing the explicit Euler method to solve the Allen-Cahn equation \eqref{A1}, under certain time step constraints, it satisfies the maximum principle and energy stability.
Furthermore, the work \cite{xu2023lack} shows that for equation \eqref{1.1}, the Implicit Euler, Crank-Nicolson, and modified Crank-Nicolson schemes are conditionally uniquely solvable, and exhibit conditional energy stability or modified energy stability. It also states that the convex splitting scheme based on the modified Crank-Nicolson scheme is unconditionally uniquely solvable and unconditionally energy stable. However, regarding the standard implicit midpoint scheme, the energy stability for equations \eqref{A1} or \eqref{1.1} remains unknown. Energy stability is often achieved by second-order approximation of the nonlinear term, effectively transforming it into the modified Crank-Nicolson scheme \cite{li2019unconditionally, du1991numerical}.

The study in \cite{xu2019stability, xu2023lack} indicate that several second-order implicit schemes mentioned above all exhibit a common issue:
even if the numerical method is uniquely solvable and energy stable, it is still possible to converge to the wrong equilibrium state or experience oscillations.
Specifically,  for any $h>0$, there exists an initial condition $u_0$ with $|u_0|>1$ such that the numerical solution converges to wrong equilibrium state.
Moreover, for $|u_0|\leq 1$, all second-order schemes studied in \cite{xu2023lack} converge to the correct steady state solution but may experience oscillations if the time step size is not sufficiently small. However, the implicit Euler method is quite different from all other methods and it can converges to correct equilibrium states regardless of initial conditions. Therefore, the energy stability of numerical solutions may not be a decisive factor for long-term computation of phase field models.

Following \cite{xu2023lack}, we hope to find another property of the numerical schemes to replace energy stability that enables their numerical solution can preserve the qualitative properties of the original equation, that is, to converge to the correct equilibrium state without numerical oscillations.
 According to the classical theory of ordinary differential equations, the solutions of the scalar ODE $u'(t)=f(u)$ keep their monotonicity, due to the facts that the solution curves never cross the zeros of $f$ and hence $f(u)$ has a definite sign.
This observation inspires us to use the monotonicity of numerical solutions instead of energy stability, leading to the critical step $h^*$ defined below.
\begin{equation}\label{eq:hstart}
 h^* = \sup \Big\{ h=h(u_{0}, \epsilon) \mid  \text{Keep the unique solvability and monotonicity of numerical solutions} \Big\}.
\end{equation}

For a given numerical method, if the initial value $u_0$ is given, no matter how large it is, we prove that $h^*>0$. As long as the actual simulation step $h\in(0, h^*]$, the numerical solution preserves monotonicity and converges to the correct equilibrium state. On the other hand, we prove that for implicit Euler scheme $h^*=h(\epsilon)$, which is independent of $u_0$ and only depends on $\epsilon$ coming from the solvability condition. Hence regardless of the initial value taken, the simulation can be guaranteed to be correct when $h<h^*$. But for various other numerical methods, when the initial value $|u_0|> 1$,  $h^*$ depends not only on $\epsilon$~ but also on $u_0$.
Moreover, we find that $h^*$ is always less than step size limitation imposed by the unique solvability condition, so no mater how small the step size $h$ is in advance, there will always be initial values that cause simulation errors. In fact, for these numerical methods, we prove that  $\inf_{u_0\in \mathbb{R}}h(u_0,\epsilon)=0$.

The rest of the paper is organized as follows. In Section \ref{sec:firstorder}, we engage in a detailed discussion of the  explicit and implicit Euler schemes. In Section \ref{sec:secorder}, we discuss four second-order schemes. We first discussed the Crank-Nicoslon scheme in detail, and the other three numerical schemes can be discussed similarly, so we only provide the general ideas and main results.
Numerical results are presented and discussed in Section \ref{sec:num}.  Finally, some concluding remarks are given in Section \ref{sec:con}

\section{Explicit and implicit Euler schemes}
\label{sec:firstorder}
In this section, we discuss the properties of the explicit Euler (EE) and implicit Euler (IE) schemes. Firstly, for the EE scheme, we demonstrate that for any time step size
$h > 0$, it is always possible to find an initial condition $u_0$ such that the scheme converges to an incorrect steady state solution. Next, we present the critical value condition of time step that ensures the numerical solution monotonically converges to the correct equilibrium state.
Subsequently, we demonstrated that the numerical solution obtained by the explicit method retains energy stability when the time step size is within the critical value constraints. In the end, we found that the unique solvability condition for implicit Euler implies the monotonicity of the numerical solution.

\subsection{Explicit Euler}
The explicit Euler  scheme for the ODE  $(\ref{1.1})$ is given by
\begin{equation}\label{eq:EE}
\frac{u_{n}-u_{n-1}}{h}+\frac1{\epsilon^2}\big(u_{n-1}^3-u_{n-1}\big)=0, \quad n\geq 1.
\end{equation}
The work \cite{ham2023stability} shows that the explicit Euler method preserves the maximum principle and energy stability when the time step is sufficiently small. However, we now show that no mater how small the step size $h$ is in advance, there always exist some initial values that cause simulation errors.

\begin{lem}\label{lem:EE}
 For any time step $h>0$, there exists an initial condition $u_0$ such that the explicit Euler scheme converges to the incorrect steady state solution.
\end{lem}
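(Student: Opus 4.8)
The plan is to recast the scheme as a single real map and then exhibit one explicit initial datum whose orbit collapses onto a wrong steady state after a single step. Writing $\lambda = h/\epsilon^2 > 0$, the explicit Euler recursion \eqref{eq:EE} reads $u_n = g(u_{n-1})$ with
\[
g(x) = x + \lambda(x - x^3) = x\,(1 + \lambda - \lambda x^2).
\]
The fixed points of $g$ are exactly the zeros of $x - x^3$, namely $x \in \{-1, 0, 1\}$, which coincide with the three steady states of \eqref{1.1}; by \eqref{eq:xx} the correct target for a datum $u_0$ is $\mathrm{sign}(u_0)$. So to prove the lemma it suffices to force the orbit to converge to one of the two remaining fixed points.

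First I would observe that once an iterate hits a fixed point of $g$, the whole subsequent orbit is constant, so convergence is automatic and the only task is to \emph{land} on a fixed point different from $\mathrm{sign}(u_0)$. This deliberately sidesteps any control of the long-time nonlinear dynamics. Concretely, I would solve $g(u_0) = 0$ with $u_0 \neq 0$: the factor $1 + \lambda - \lambda u_0^2$ vanishes precisely when
\[
u_0 = \sqrt{1 + \frac{1}{\lambda}} = \sqrt{1 + \frac{\epsilon^2}{h}}.
\]
With this choice $u_1 = g(u_0) = u_0 \cdot 0 = 0$, and since $g(0) = 0$ one gets $u_n = 0$ for every $n \geq 1$, so $\lim_{n\to\infty} u_n = 0$. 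But $u_0 > 1 > 0$ gives $\mathrm{sign}(u_0) = 1$, hence the scheme converges to the incorrect steady state. Because the construction uses only $\lambda > 0$, it is valid for every $h > 0$ and every $\epsilon \in (0,1)$, and the datum satisfies $|u_0| > 1$, consistent with the phenomenon announced in the introduction.

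The point I expect to require the most care in presentation, rather than in difficulty, is justifying why a more ``generic'' mechanism is unnecessary here. One could instead try to overshoot into the opposite well, i.e. choose $u_0$ with $u_1 < 0$, and then argue $u_n \to -1$; this is feasible since $g(\sqrt{1+1/\lambda}) = 0$ while $g(x) \to -\infty$ as $x \to +\infty$, so by the intermediate value theorem there is a $u_0^\star > \sqrt{1+1/\lambda}$ with $g(u_0^\star) = -1$, again giving convergence to the wrong sign. However, upgrading such an example to an \emph{open set} of data converging to $-1$ would require $-1$ to be attracting for $g$, i.e. $|g'(-1)| = |1 - 2\lambda| < 1$, equivalently $0 < h < \epsilon^2$; for large $h$ the map is logistic-like near the wells and orbits that merely start near $-1$ need not converge. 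The exact-landing argument above avoids this restriction entirely, which is exactly why it delivers the statement uniformly in $h$.
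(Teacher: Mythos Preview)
Your proof is correct and uses essentially the same mechanism as the paper: force the first iterate to land exactly on a wrong fixed point of the one-step map, after which the orbit is constant. The only difference is the choice of target—the paper sets $u_1=-1$ and obtains $u_0>0$ via the intermediate value theorem (the very alternative you sketch in your last paragraph), whereas you set $u_1=0$ and get the explicit datum $u_0=\sqrt{1+\epsilon^2/h}$; both work for every $h>0$, and yours is marginally more concrete.
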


\begin{proof}
The basic idea is that we first assume $u_1=-1$ and then solve $u_0$ in reverse from equation \eqref{eq:EE}.
Let
\[
g(v)=v-\frac{h}{\epsilon^2}(v^3-v)+1.
\]
It is easy to see $g(0)=1>0$ and $g(v)\rightarrow -\infty$ as $v\rightarrow +\infty$. Thus, there exists $u_0 > 0$ such that $g(u_0)=0$ by continuity of $g$.
This implies $u_1 = - 1$ solves \eqref{eq:EE} for $n = 1$. Furthermore, it follows from \eqref{eq:EE} that $u_{n}=-1$ for all $n\geq1$ as long as $u_{1}=-1$.
Therefore,  $u_n = -1\neq \mathrm{sign}(u_0)$ for any $n\geq 1.$

In the same way, we can show that there exist $u_0 < 0 $ such that $u_1 = 1$, which is the solution to \eqref{eq:EE} for $n = 1$. We then also have
$u_n = 1 \neq \mathrm{sign}(u_0)$ for all $n \geq 1$.
\end{proof}

We see from Lemma \ref{lem:EE} that when the initial value $|u_0|>1$, the time step size restrictive requirements such that the numerical solutions of the explicit Euler scheme converge to the correct equilibrium state depends on the initial value $u_0$. The following theorem shows this dependence and establish the optimal upper bound that allows numerical solutions to preserve monotonicity.

\begin{thm}\label{thm:EE}
Given an initial condition $u_0$, the explicit Euler scheme \eqref{eq:EE} for the has the following properties.

(i) If $u_0\in \{0,\pm1\}$, then $u_n = \mathrm{sign}(u_0)$ for all $n\geq1$ and for any $h$ and $\epsilon$;

(ii) If $u_0\notin \{0,\pm1\}$, then there exists a constant $h^*=h^*(u_0, \epsilon)>0$ such that for any $h\in(0, h^*]$, $u_n$ monotonically converge to correct equilibrium state $sign (u_0) $ as $n\rightarrow\infty$. Specifically, we have $h^*=\frac{\epsilon^2}{u_0^2+|u_0|}$ for $|u_0|>1$ and $h^*=\frac{\epsilon^2}{2}$ for $0< |u_0|< 1$ respectively.
 \end{thm}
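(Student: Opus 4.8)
The plan is to analyze the explicit Euler recursion $u_n = u_{n-1} + \frac{h}{\epsilon^2}(u_{n-1} - u_{n-1}^3) = \phi(u_{n-1})$ as a one-dimensional discrete dynamical system and show that under the stated step-size constraint the sequence is monotone and lands in a basin that attracts to $\mathrm{sign}(u_0)$. Part (i) is immediate: if $u_0 \in \{0, \pm 1\}$ then $u_{n-1}^3 - u_{n-1} = 0$, so $u_n = u_{n-1}$ and the sequence is constant, equal to $\mathrm{sign}(u_0)$ (with $\mathrm{sign}(0)=0$). The substance is in (ii).

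For (ii), I would split into the two regimes. First treat $0 < |u_0| < 1$; by symmetry ($\phi(-v) = -\phi(v)$) assume $0 < u_0 < 1$. The key geometric facts: on $(0,1)$ we have $v - v^3 > 0$, so $u_n > u_{n-1}$ whenever $u_{n-1} \in (0,1)$; and I must show the sequence cannot overshoot past $1$. The map $\phi(v) = v + \frac{h}{\epsilon^2}(v - v^3)$ maps $(0,1)$ into $(0,1]$ provided $h \le h^* = \epsilon^2/2$: indeed on $[0,1]$ one checks $\phi(v) \le 1$ iff $\frac{h}{\epsilon^2}(v - v^3) \le 1 - v$, i.e. $\frac{h}{\epsilon^2} v(1+v) \le 1$ (dividing by $1-v > 0$), and the left side is maximized at $v=1$ giving the bound $\frac{h}{\epsilon^2} \cdot 2 \le 1$, i.e. $h \le \epsilon^2/2$. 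So $\{u_n\}$ is increasing and bounded above by $1$, hence converges to some limit $L \in (0,1]$, which must be a fixed point of $\phi$; the only fixed point in $(0,1]$ is $1$, so $u_n \to 1 = \mathrm{sign}(u_0)$. Second, treat $|u_0| > 1$; again by symmetry take $u_0 > 1$. Now $v - v^3 < 0$ on $(1,\infty)$, so the sequence decreases as long as it stays in $(1,\infty)$; I must prevent undershooting below $1$ (which could send it to the wrong basin or cause oscillation). One needs $\phi(v) \ge 1$ for all $v \in (1, u_0]$, i.e. $v + \frac{h}{\epsilon^2}(v - v^3) \ge 1$, i.e. $\frac{h}{\epsilon^2} v(v^2-1) \le v - 1$, i.e. $\frac{h}{\epsilon^2} v(v+1) \le 1$ after dividing by $v - 1 > 0$; since $v(v+1)$ is increasing, the binding constraint is at $v = u_0$, giving $h \le \frac{\epsilon^2}{u_0^2 + |u_0|} = h^*$. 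Then $\{u_n\}$ is decreasing and bounded below by $1$, converges to a fixed point in $[1, u_0]$, which can only be $1$, so $u_n \to 1 = \mathrm{sign}(u_0)$.

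The main obstacle, and the point requiring the most care, is establishing that $h^*$ is \emph{optimal}, i.e. that the sequence genuinely leaves the good region or loses monotonicity for $h$ slightly larger than the stated value. For $|u_0| > 1$ this means: if $h > \frac{\epsilon^2}{u_0^2 + |u_0|}$ then $\phi(u_0) < 1$ (a one-line computation reversing the inequality above), so after one step we land below $1$; one then should argue this causes either convergence to the wrong equilibrium or oscillation — e.g. if $\phi(u_0)$ lands in $(-1, 0)$ or below $-1$ we head toward $-1$, and the intermediate cases can be pushed further by continuity/monotonicity of $\phi$ in $h$. For $0 < |u_0| < 1$ with $h > \epsilon^2/2$, one shows $\phi$ applied near $v = 1^-$ overshoots past $1$, breaking monotonicity. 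I would also need to be slightly careful about the case where $u_0 \in (0,1)$ but the first iterate already equals exactly $1$ (allowed, since then it stays at $1$), which is why the interval for $h$ is closed, $(0, h^*]$.

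One clean way to package the convergence conclusion in both regimes is a single lemma: if $\{u_n\}$ is monotone and bounded and every limit point is a fixed point of the continuous map $\phi$, and the only fixed points are $\{0, \pm 1\}$, then the bounded monotone sequence converges to whichever of these fixed points bounds it; combined with the interval-invariance established above this pins the limit to $\mathrm{sign}(u_0)$. I expect the write-up to be short once the two inequality computations ($h \le \epsilon^2/2$ for $|u_0|<1$, $h \le \epsilon^2/(u_0^2+|u_0|)$ for $|u_0|>1$) and the invariance-of-interval argument are in place; the optimality direction is where I would spend the most words.
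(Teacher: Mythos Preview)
Your approach is essentially the same as the paper's: both show the interval $[1,u_0]$ (resp.\ $(0,1]$) is invariant under the one-step map precisely when $h\le \epsilon^2/(u_0^2+|u_0|)$ (resp.\ $h\le \epsilon^2/2$), and then invoke monotone convergence to the unique fixed point in that interval; the paper phrases the invariance check via an auxiliary function $\psi(x)=x-\gamma+\frac{h}{\epsilon^2}(\gamma^3-\gamma)$ whose sign at $x=1$ yields exactly your inequality $\frac{h}{\epsilon^2}\gamma(\gamma+1)\le 1$. Note that the paper does \emph{not} prove optimality of $h^*$ (only sufficiency), so the direction you flag as ``requiring the most care'' is extra work beyond what the theorem and the paper's proof actually establish.
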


\begin{proof}
For the first-order nonlinear difference equation \eqref{eq:EE}, we can determine the equilibrium points by solving the equation
$\frac{1}{\epsilon^2}\left({u^{*}}^3-u^{*}\right)=0$, thereby obtaining the same equilibrium states as those in the continuity equation \eqref{1.1}, namely, $\pm1$ and 0.
We will only consider the case where $u_0\geq0$, as the other case with $u_0<0$ can be demonstrated analogously.
The explicit Euler scheme \eqref{eq:EE} can be reformulated as:
\begin{equation}\label{eq:EE2}
u_n-u_{n-1}+\frac{h}{\epsilon^2}(u_{n-1}^3-u_{n-1})=0.
\end{equation}

Evidently, for any specified initial value $u_0$, there exists a unique solution $\{u_n\}_{n=1}^{\infty}$ to \eqref{eq:EE2} for any values of $h$ and $\epsilon$.
Subsequently, we examine the monotonicity of the numerical solutions.

Firstly, it is straightforward to deduce from \eqref{eq:EE2} that if $u_0=0$ or 1, then $u_n = \mathrm{sign}(u_0)$ for all $n\geq1.$

Secondly, we will analyze the monotonicity of $u_n$  for $u_0>1$.
Consider the auxiliary function derived from \eqref{eq:EE2}:
\begin{equation}\label{eq:EE3}
\psi (x)=x-\gamma+\frac{h}{\epsilon^2}(\gamma^3-\gamma).
\end{equation}
By considering the auxiliary function \eqref{eq:EE3} with $\gamma>1$, it is straightforward to observe that $\psi'(x)=1>0$. Assuming $ h \in (0, \frac{\epsilon^2}{\gamma^2 + \gamma}] $, then we have $\psi(1) \leq 0 $ and $\psi(x) \rightarrow +\infty $ as $x \rightarrow +\infty$. Consequently, there exists a unique $x^* \geq 1$ such that $\psi(x^*) = 0$. This implies that if $h \in (0, \frac{\epsilon^2}{u_{n-1}^2 + u_{n-1}}] $, then $u_n \geq 1$ for all $n \geq 1$. Furthermore, from \eqref{eq:EE2}, we derive that $u_n - u_{n-1} = -\frac{h}{\epsilon^2} \left((u_{n-1})^3 - u_{n-1}\right) \leq 0$. Thus, we obtain that
\[
\frac{\epsilon^2}{u_{n-1}^2 + u_{n-1}} \leq \frac{\epsilon^2}{u_{n}^2 + u_{n}}.
\]
Therefore, when \( 0 < h \leq \min_{n \in \mathbf{N}} \left\{\frac{\epsilon^2}{u_n^2 + u_n}\right\} = \frac{\epsilon^2}{u_0^2 + u_0} \), $\{u_n\}$ is monotonically decreasing and has a lower bound of $1$ for all \( n \geq 1 \). Taking the limit with respect to $n$ in  \eqref{eq:EE2} and incorporating the condition that \( u_n \geq 1 \) for all \( n \geq 1 \), we deduce that \( u^* = 1 \). Hence, when \( h \in (0, \frac{\epsilon^2}{u_0^2 + u_0}] \), $\{u_n\}$ monotonically decreases and converges to \( 1 = \mathrm{sign}(u_0) \) as \( n \rightarrow \infty \).

Thirdly, assume that \( 0 < u_0 < 1 \). By employing the auxiliary function \eqref{eq:EE3} with \( 0 < \gamma < 1 \), we can infer that for 
\( h \in (0, \frac{\epsilon^2}{u_{n-1}^2 + u_{n-1}}] \), it holds that \( 0 < u_n \leq 1 \) for all \( n \geq 1 \). This is follows from the fact \( \psi(0) < 0 \) and \( \psi(1) \geq 0 \), and it implies that
\[
u_n - u_{n-1} = -\frac{h}{\epsilon^2} \left((u_{n-1})^3 - u_{n-1}\right) \geq 0.
\]
 Consequently, we have
\(
\frac{\epsilon^2}{u_{n-1}^2 + u_{n-1}} \geq \frac{\epsilon^2}{u_{n}^2 + u_{n}}.
\)
Therefore, when \( 0 < h \leq \inf_{n \in \mathbf{N}} \left\{\frac{\epsilon^2}{u_n^2 + u_n}\right\} = \frac{\epsilon^2}{2} \), $\{u_n\}$ is monotonically increasing and has a upper bound of $1$ for all \( n \geq 1 \). Ultimately, we find that when \( h \in \left(0, \frac{\epsilon^2}{2}\right] \), $\{u_n\}$ monotonically increases and converges to \( 1 = \mathrm{sign}(u_0) \) as \( n \rightarrow \infty \).
\end{proof}

%The following Theorem \ref{thm:EE2} prove that for any given initial condition $u_0$,
%the numerical solution \( u_n \) obtained in Theorem \ref{thm:EE} satisfies energy stability.

We prove that under the conditions of Theorem \ref{thm:EE}, for any initial value $u_0$, the numerical solution satisfies discrete energy stability.

\begin{thm}\label{thm:EE2}
      Given $u_0$, when $h\in (0,h^*]$, the EE scheme $\eqref{eq:EE}$ is energy stability, i.e., $E(u_n)\leq E(u_{n-1})$.
%\begin{eqnarray}\label{eq:es-CN}
%E(u_n)\leq E(u_{n-1}).
%\end{eqnarray}
\end{thm}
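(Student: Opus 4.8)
The plan is to reduce the discrete energy inequality to the monotonicity of the double-well potential on one-sided intervals, and then feed in the monotonicity of $\{u_n\}$ already established in Theorem~\ref{thm:EE}. Recall $E(u)=\frac1{\epsilon^2}F(u)$ with $F(u)=\frac14(u^2-1)^2$, so $F$ is even and $F'(u)=u^3-u=u(u^2-1)$; inspecting the sign of $F'$ shows that $F$ is nondecreasing on $[-1,0]$ and on $[1,\infty)$ and nonincreasing on $(-\infty,-1]$ and on $[0,1]$. Equivalently, along any monotone path that stays on one side of a well $\pm1$ and moves toward that well, the value of $F$ does not increase. This is the only property of $F$ that the argument needs.

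First I would dispose of the case $u_0\in\{0,\pm1\}$: by Theorem~\ref{thm:EE}(i) the iterates are constant, so trivially $E(u_n)=E(u_{n-1})$. For $u_0\notin\{0,\pm1\}$ and $h\in(0,h^*]$ I would split into four cases according to $\mathrm{sign}(u_0)$ and whether $|u_0|>1$ or $|u_0|<1$, each handled by Theorem~\ref{thm:EE}(ii). If $u_0>1$, then $1\le u_n\le u_{n-1}$ for all $n$, and since $F$ is nondecreasing on $[1,\infty)$ we get $F(u_n)\le F(u_{n-1})$. If $0<u_0<1$, then $u_{n-1}\le u_n\le1$, and since $F$ is nonincreasing on $[0,1]$ we again get $F(u_n)\le F(u_{n-1})$. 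The two remaining cases $u_0<-1$ and $-1<u_0<0$ follow in the same way, using the symmetry $F(-u)=F(u)$ together with the corresponding monotonicity of $\{u_n\}$ toward $-1$. In every case $E(u_n)=\frac1{\epsilon^2}F(u_n)\le\frac1{\epsilon^2}F(u_{n-1})=E(u_{n-1})$, which is the assertion.

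The argument is essentially bookkeeping, and the only point requiring care is matching the monotone direction of $\{u_n\}$ guaranteed by Theorem~\ref{thm:EE} with the monotone direction of $F$ on the half-line on which the iterates are trapped; once the ``trapped on one side of the well'' part of Theorem~\ref{thm:EE} is available, no time-step restriction beyond $h\le h^*$ is needed. For contrast, a direct Taylor expansion $F(u_n)-F(u_{n-1})=F'(u_{n-1})(u_n-u_{n-1})+\frac12F''(\xi)(u_n-u_{n-1})^2$ combined with $u_n-u_{n-1}=-\frac h{\epsilon^2}F'(u_{n-1})$ yields $E(u_n)-E(u_{n-1})=\bigl(\tfrac12F''(\xi)-\tfrac{\epsilon^2}{h}\bigr)\tfrac1{\epsilon^2}(u_n-u_{n-1})^2$, but bounding $F''(\xi)=3\xi^2-1$ when $|u_0|$ is large would force a step strictly smaller than $h^*$; hence the monotonicity route is both simpler and sharper, which is why I would take it.
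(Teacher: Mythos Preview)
Your proof is correct and is cleaner than the paper's own argument, so this is a case of a genuinely different (and more elementary) route.

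The paper proceeds by an algebraic identity: multiplying the EE step by $\frac{h}{4\epsilon^2}(u_n^3+u_{n-1}^3)$ and rearranging, it obtains
\[
E(u_n)-E(u_{n-1})=-\frac{h}{4\epsilon^4}\,(u_{n-1}^3-u_{n-1})(u_n+u_{n-1})(u_n^2+u_{n-1}^2-2),
\]
and then invokes Theorem~\ref{thm:EE} to check that the three factors on the right have a definite (common) sign whenever the iterates stay on one side of $\pm1$. Your argument bypasses this identity entirely: you observe that $F$ is monotone on each of the four intervals $(-\infty,-1]$, $[-1,0]$, $[0,1]$, $[1,\infty)$, and that Theorem~\ref{thm:EE} traps the whole orbit inside one such interval with the iterates moving toward the well. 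Both proofs consume exactly the same input from Theorem~\ref{thm:EE} (monotonicity plus no crossing of the equilibrium), so neither is logically stronger, but yours makes it transparent that energy decay here is nothing more than ``$F$ decreases along any monotone path toward a well,'' whereas the paper's identity partially obscures this. The trade-off is that the paper's explicit formula for $E(u_n)-E(u_{n-1})$ could in principle give a quantitative decay rate, while your route is purely qualitative. Your closing remark contrasting with the Taylor-expansion approach is also accurate: bounding $\tfrac12 F''(\xi)\le\epsilon^2/h$ for $|u_0|$ large forces roughly $h\lesssim\frac{2\epsilon^2}{3u_0^2}$, strictly below $h^*=\frac{\epsilon^2}{u_0^2+|u_0|}$, so the monotonicity route is indeed sharper.
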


\begin{proof}
Multiplying \eqref{eq:EE} by $\frac{h}{4\epsilon^2}(u_n^3+u_{n-1}^3)$ gives
%\begin{equation}\label{eq3:EE}
\[
\frac{1}{4\epsilon^2}(u_n^4-u_{n-1}^4+u_nu_{n-1}^3-u_n^3u_{n-1})+\frac{h}{4\epsilon^4}(u_n^3+u_{n-1}^3)\big(u_{n-1}^3-u_{n-1}\big)=0.
\]
%\end{equation}
Hence, we have
\begin{equation}\label{eq4:EE}
E(u_n)-E(u_{n-1})+\frac{1}{4\epsilon^2}(2-u_nu_{n-1})(u_n^2-u_{n-1}^2)+\frac{h}{4\epsilon^4}(u_n^3+u_{n-1}^3)\big(u_{n-1}^3-u_{n-1}\big)=0,
\end{equation}
where $E(v)=\frac{1}{4\epsilon^2}(v^2-1)^2$. Furthermore, we have $u_n-u_{n-1}=-\frac{h}{\epsilon^2}(u_{n-1}^3-u_{n-1})$ from \eqref{eq:EE}. Substituting this into \eqref{eq4:EE} yields that 
%\begin{equation}\label{eq5:EE}'
$
E(u_n)-E(u_{n-1})+\frac{h}{4\epsilon^4}(u_nu_{n-1}-2)(u_n+u_{n-1})(u_{n-1}^3-u_{n-1})+\frac{h}{4\epsilon^4}(u_n^3+u_{n-1}^3)(u_{n-1}^3-u_{n-1})=0,
$
%\end{equation}
which can be written as
\begin{equation}\label{eq6:EE}
E(u_n)-E(u_{n-1})+\frac{h}{4\epsilon^4}(u_{n-1}^3-u_{n-1})(u_n+u_{n-1})\left(u_n^2+u_{n-1}^2-2\right)=0.
\end{equation}
Since $h\in (0,h^*]$, it follows from Theorem \ref{thm:EE} that $\{u_n\}$ maintains monotonicity and does not cross the equilibrium state $u*=1$, we have
\begin{equation}\label{eq7:EE}
\frac{h}{4\epsilon^4}(u_{n-1}^3-u_{n-1})(u_n+u_{n-1})\left(u_n^2+u_{n-1}^2-2\right)>0,
\end{equation}
Therefore, it follows  from \eqref{eq6:EE} that $E(u_n)\leq E(u_{n-1})$.
%\begin{equation}\label{eq7:EE}
%E(u_n)\leq E(u_{n-1}).
%\end{equation}
%
\end{proof}

\subsection{Implicit Euler}
The implicit Euler (IE) scheme for the ODE  $(\ref{1.1})$ is given by

\begin{eqnarray}\label{eq:IE}
\frac{u_n-u_{n-1}}{h}+\frac1{\epsilon^2}\big(u_n^3-u_n\big)=0, \quad n\geq 1.
\end{eqnarray}
Recent literatures \cite{xu2023lack,cheng2021asymptotic,xu2019stability,hao2024stability} has  indicated that the implicit Euler method is conditionally energy stable and has very superior performance compared to other methods commonly used in phase field equations. We now analyzse the monotonicity of the IE scheme $(\ref{eq:IE})$.

\begin{thm}
      Given an initial condition $u_0$, there exists $h^*=\epsilon^2$ such that for any $h\in(0, h^*]$, the IE scheme $(\ref{eq:IE})$ for the
      ODE $(\ref{1.1})$ has the following properties.

(i) If $u_0\in \{0,\pm1\}$, then $u_n = \operatorname{sign}(u_0)$  for all $n\geq1.$

(ii) If $u_0\notin \{0,\pm1\}$, then $u_n \rightarrow \operatorname{sign}(u_0) $ monotonically as $n\rightarrow+\infty$.

 \end{thm}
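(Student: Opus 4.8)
The plan is to recast a single implicit Euler step as the scalar equation $u_{n-1}=\Phi(u_n)$ for the fixed map $\Phi(v):=v+\tfrac{h}{\epsilon^2}(v^3-v)=\bigl(1-\tfrac{h}{\epsilon^2}\bigr)v+\tfrac{h}{\epsilon^2}v^3$, and then exploit monotonicity of $\Phi$. I would first compute $\Phi'(v)=\bigl(1-\tfrac{h}{\epsilon^2}\bigr)+\tfrac{3h}{\epsilon^2}v^2$ and observe that for $h\le\epsilon^2=h^*$ this is nonnegative, vanishing at most at $v=0$, so $\Phi$ is strictly increasing on $\mathbb{R}$; since $\Phi(v)\to\pm\infty$ as $v\to\pm\infty$, it follows that $\Phi:\mathbb{R}\to\mathbb{R}$ is a strictly increasing bijection. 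This single fact gives both the unique solvability of \eqref{eq:IE} for every $h\in(0,\epsilon^2]$ and a well-defined, increasing inverse $\Phi^{-1}$. (The restriction $h\le\epsilon^2$ is exactly what makes this argument work: for $h>\epsilon^2$ the map $\Phi$ acquires a local max and a local min and is no longer injective, which is precisely why $h^*=\epsilon^2$, and note it depends only on $\epsilon$, not on $u_0$, in contrast with the explicit Euler case.)

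The structural point I would use next is that $\Phi$ fixes exactly the three equilibria: $\Phi(0)=0$ and $\Phi(\pm1)=\pm1$. Hence $\Phi^{-1}$ also fixes $0,\pm1$, and being increasing it maps each of the four open intervals $(-\infty,-1)$, $(-1,0)$, $(0,1)$, $(1,\infty)$ into itself. Part (i) is then immediate by induction: if $u_0\in\{0,\pm1\}$ then $u_n=\Phi^{-1}(u_{n-1})=u_{n-1}$ for all $n$, so $u_n=\operatorname{sign}(u_0)$.

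For part (ii), since $\Phi$ is odd, I would reduce to $u_0>0$ (replacing $u_n$ by $-u_n$ handles $u_0<0$). If $u_0>1$, invariance of $(1,\infty)$ under $\Phi^{-1}$ gives $u_n>1$ for all $n$; then from \eqref{eq:IE}, $u_n-u_{n-1}=-\tfrac{h}{\epsilon^2}u_n(u_n^2-1)<0$, so $\{u_n\}$ is strictly decreasing and bounded below by $1$, hence converges to some $L\ge1$; passing to the limit in \eqref{eq:IE} forces $L^3-L=0$, so $L=1=\operatorname{sign}(u_0)$. If $0<u_0<1$, invariance of $(0,1)$ gives $0<u_n<1$, so $u_n-u_{n-1}=-\tfrac{h}{\epsilon^2}u_n(u_n^2-1)>0$; thus $\{u_n\}$ is strictly increasing, bounded above by $1$ and below by $u_0>0$, so it converges to $L\in(0,1]$ with $L^3-L=0$, i.e.\ $L=1=\operatorname{sign}(u_0)$. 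The monotone convergence in each case is exactly the assertion of (ii).

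The argument is essentially elementary; the only step needing real care is establishing the \emph{global} monotonicity (hence bijectivity) of $\Phi$ under $h\le\epsilon^2$, which is simultaneously the source of unique solvability and of the threshold $h^*=\epsilon^2$. Once that is in hand, the invariance of the inter-equilibrium intervals under $\Phi^{-1}$ together with the monotone convergence theorem finishes everything, and no oscillation can occur because the iterates never cross an equilibrium.
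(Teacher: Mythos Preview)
Your proof is correct and follows essentially the same approach as the paper: both hinge on the fact that for $h\le\epsilon^2$ the map $v\mapsto \tfrac{h}{\epsilon^2}v^3+(1-\tfrac{h}{\epsilon^2})v$ is strictly increasing (the paper phrases this via an auxiliary function $\psi$ and the intermediate value theorem, you via the global bijection $\Phi$ and its increasing inverse), then use invariance of the intervals between equilibria to get boundedness, read off monotonicity of $\{u_n\}$ directly from \eqref{eq:IE}, and pass to the limit. Your packaging through $\Phi^{-1}$ and the oddness reduction is a bit cleaner, but the underlying argument is the same.
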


\begin{proof}

Similar to Theorem \eqref{thm:EE}, it is easy to obtain that equation \eqref{eq:IE}  has the same equilibrium points as the continuous equation \eqref{1.1}, i.e., $\pm1$ and 0.

 %Rearranging IE scheme $(\ref{b2.1})$
%\begin{equation}\label{5.2}
%\frac{h}{\epsilon^2}u_{n}^3+\left(1-\frac{h}{\epsilon^2}\right)u_{n}-u_{n-1}=0.
%\end{equation}
Firstly, when $h\leq \epsilon^2$, IE is uniquely solvable \cite{xu2023lack}. It is easy to see from \eqref{eq:IE}  that if $u_0\in \{0,\pm1\}$, then $u_n=\mathrm{sign}(u_0)$ for all $n\geq1$.

Next, we will discuss the monotonicity of $u_n$  for $u_0>1$. Consider the auxiliary function
\begin{equation}
\psi(x)=\frac{h}{\epsilon^2}x^3+\left(1-\frac{h}{\epsilon^2}\right)x+\gamma,
\end{equation}
where $\gamma<-1$. Since $h\leq \epsilon^2$, then $\psi'(x)> 0$ for any $x\neq 0$. However, $\psi(1)=1+\gamma<0$, and $\psi(x)\rightarrow+\infty $ as $x\rightarrow+\infty $. Thus, there exists unique $x^*>1$ such that $\psi(x^*)=0$, which implies $1<u_n<+\infty$ for all $ n\in [1,+\infty)$.  Therefore, it follows from equation \eqref{eq:IE}  that $u_n-u_{n-1}=\frac{h}{\epsilon^2}\big((u_n)^3-u_n\big)<0$. Now, taking the limit with respect to $n$ in \eqref{eq:IE} and combining it with the condition that $u_n\geq1$ for all $n\geq1$, then we know that $u^*=1$. Consequently, when $h \leq \epsilon^2$, $\{u_n\}$ monotonically decreases and converges to $1=\text{sign}(u_0)$ as $n\to\infty$. For $u_0<1$, similar arguments can yield the required results.
\end{proof}

%\begin{rmk}
%We observe that for the implicit Euler method, the limitation on the time step size arises solely from the condition of unique solvability. This condition ensures that the numerical solution preserves monotonicity and converges to the correct equilibrium state. However, this is not the case for the next section of our study, which involves second-order implicit schemes

We observe that the limitation on step size for the implicit Euler method only comes from the unique solvability. Under this condition, the numerical solution remains monotonic and converges to the correct equilibrium state. This is completely different from the other second-order numerical formats that will be discussed below..
%\end{rmk}

\section{Second order scheme}
\label{sec:secorder}

In this section, we study four second-order numerical methods. We first discussed the Crank-Nicoslon scheme in detail, and the other three numerical schemes can be discussed similarly, so we only provide the general ideas and main results.

\subsection{Crank-Nicolson scheme}
The second-order CN scheme for the ODE  $(\ref{1.1})$ is given by

\begin{eqnarray}\label{eq:CN}
\frac{u_n-u_{n-1}}{h}+\frac{1}{2 \epsilon^2} \left(u_n^3-u_n\right)+\frac{1}{2 \epsilon^2} \left(u_{n-1}^3-u_{n-1}\right)=0, \quad n\geq 1.
\end{eqnarray}

\begin{lem}\label{lem:CN1}
(\cite{xu2023lack})  When $h\leq 2\epsilon^2$, the CN scheme \eqref{eq:CN} is uniquely solvable and satisfies the modified energy stability
\begin{equation}\label{eq:CN2}
E(u_n)+\frac{h}{4\epsilon^4}[u_n^3-u_n]^2\leq E(u_{n-1})+\frac{h}{4\epsilon^4}[u_{n-1}^3-u_{n-1}]^2.
\end{equation}
However, for any step size $h>0$, there always exists an initial condition $u_0$ with $\left|u_0\right|>1$,
such that the CN scheme converges to the wrong steady state solution.
 \end{lem}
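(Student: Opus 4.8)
The plan is to establish the three claims separately, relying on \cite{xu2023lack} for the first two, and to prove the "wrong steady state" phenomenon by a backward-solving argument analogous to the one used for the explicit Euler scheme in Lemma \ref{lem:EE}. For unique solvability when $h \le 2\epsilon^2$: at step $n$ the unknown $u_n$ solves $\frac{h}{2\epsilon^2} u_n^3 + \left(1 - \frac{h}{2\epsilon^2}\right) u_n + \left(\frac{h}{2\epsilon^2}(u_{n-1}^3 - u_{n-1}) - u_{n-1}\right) = 0$; viewing the left-hand side as a cubic $p(u_n)$, its derivative is $\frac{3h}{2\epsilon^2} u_n^2 + \left(1 - \frac{h}{2\epsilon^2}\right)$, which is strictly positive for all $u_n$ precisely when $h \le 2\epsilon^2$, so $p$ is strictly increasing and has a unique real root. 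For the modified energy inequality \eqref{eq:CN2}: I would multiply \eqref{eq:CN} by $(u_n^3 - u_n) + (u_{n-1}^3 - u_{n-1})$ (equivalently, test against the discrete chemical potential), use the algebraic identity $F(u_n) - F(u_{n-1}) = \frac{1}{4}(u_n^2 - u_{n-1}^2)(u_n^2 + u_{n-1}^2 - 2) \le \frac{1}{2}(u_n^3 - u_n + u_{n-1}^3 - u_{n-1})(u_n - u_{n-1}) + \text{(sign-definite remainder)}$, and combine with $u_n - u_{n-1} = -\frac{h}{2\epsilon^2}\big((u_n^3-u_n)+(u_{n-1}^3-u_{n-1})\big)$ to absorb the cross term into the $\frac{h}{4\epsilon^4}[\cdot]^2$ quantities; the step-size constraint $h \le 2\epsilon^2$ is again what makes the remainder terms have the right sign. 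Both of these are carried out in \cite{xu2023lack}, so I would cite them and only sketch the computation.

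For the main new assertion — that for every $h > 0$ there is an initial value $u_0$ with $|u_0| > 1$ whose CN trajectory converges to the wrong steady state — I would mimic the mechanism of Lemma \ref{lem:EE}: force the scheme into a spurious fixed point. Observe that $u = -1$ is an equilibrium of \eqref{eq:CN}, so if we can arrange $u_1 = -1$ then $u_n = -1$ for all $n \ge 1$, giving $\lim_n u_n = -1 \ne \mathrm{sign}(u_0)$ whenever $u_0 > 0$. Setting $u_1 = -1$ and $u_{n-1} = u_0$ in \eqref{eq:CN}, I need $u_0 > 1$ solving
\begin{equation*}
g(u_0) := \frac{-1 - u_0}{h} + \frac{1}{2\epsilon^2}\big((-1)^3 - (-1)\big) + \frac{1}{2\epsilon^2}\big(u_0^3 - u_0\big) = 0,
\end{equation*}
i.e. $g(u_0) = \frac{-1-u_0}{h} + \frac{1}{2\epsilon^2}(u_0^3 - u_0) = 0$. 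Since $g(1) = \frac{-2}{h} < 0$ and $g(u_0) \to +\infty$ as $u_0 \to +\infty$ (the cubic term dominates), the intermediate value theorem gives a root $u_0 > 1$. This $u_0$ depends on $h$ and $\epsilon$, which is exactly the point of the lemma. The symmetric choice $u_1 = +1$ produces, for any $h>0$, an initial value $u_0 < -1$ whose trajectory converges to $+1 \ne \mathrm{sign}(u_0)$.

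The one subtlety I should address, and the part I expect to be the only genuine obstacle, is that the backward step must be consistent with the forward unique-solvability: I must check that the $u_0 > 1$ produced above actually has $u_1 = -1$ as the \emph{unique} CN iterate from it when $h \le 2\epsilon^2$ (for $h > 2\epsilon^2$ uniqueness may fail, but then one simply picks the branch landing on $-1$, and the conclusion about a wrong limit still holds). Since we showed the step-$n$ cubic $p$ is strictly monotincreasing when $h \le 2\epsilon^2$, the value $u_1 = -1$ we constructed is necessarily that unique root, so no inconsistency arises; and once $u_1 = -1$, uniqueness at every subsequent step forces $u_n \equiv -1$. Assembling these pieces gives the stated lemma.
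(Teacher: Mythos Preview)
The paper does not supply its own proof of this lemma; it is simply quoted from \cite{xu2023lack}. So there is nothing in the paper to compare your argument against line by line. That said, your approach is sound and is exactly the template the paper itself uses elsewhere: your backward-solving construction (set $u_1=-1$, then find $u_0>1$ via the intermediate value theorem applied to $g$) is the same mechanism the paper employs for the explicit Euler scheme in Lemma~\ref{lem:EE} and invokes again for the implicit midpoint scheme in Lemma~\ref{lem:IM1}. Your unique-solvability argument (monotonicity of the step cubic when $h\le 2\epsilon^2$) is the standard one and matches what the paper uses later in the proof of Theorem~\ref{thm:CN}.

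One small point worth tightening: when $h=2\epsilon^2$ the derivative $\tfrac{3h}{2\epsilon^2}u_n^2+(1-\tfrac{h}{2\epsilon^2})=3u_n^2$ vanishes at $u_n=0$, so it is nonnegative rather than strictly positive; the cubic is still strictly increasing, so uniqueness survives, but your phrasing ``strictly positive \dots\ precisely when $h\le 2\epsilon^2$'' is slightly off. For $h>2\epsilon^2$ your ``pick the branch landing on $-1$'' is a legitimate reading of the claim (existence of a CN trajectory converging to the wrong state), though you might note explicitly that in this regime the lemma is really a statement about one of possibly several solution branches.
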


Lemma $\ref{lem:CN1}$ shows that the unique solvability and modified energy stability of the CN scheme do not necessarily ensure that the numerical solution  converge to the correct steady-state solution and reveals that the limitation on the time step size is intrinsically connected to the initial value $u_0$. The following Theorem \ref{thm:CN} elucidates this dependency.

\begin{thm}\label{thm:CN}
      Given an initial condition $u_0$, the CN scheme \eqref{eq:CN}  for the
      ODE $(\ref{1.1})$ has the following properties.

(i) If $u_0\in \{0,\pm1\}$, when $h\leq2\epsilon^2$, then $u_n = \operatorname{sign}(u_0)$  for all $n\geq1.$

(ii) If $u_0\notin \{0,\pm1\}$, then there exists a constant $h^*=h^*(u_0, \epsilon)>0$ such that for any $h\in(0, h^*]$, $\{u_n\}$ monotonically converge to correct equilibrium state $\mathrm{sign}(u_0) $ as $n\rightarrow\infty$. Specifically, we have $h^*=\frac{2\epsilon^2}{u_0^2+|u_0|}$ for $|u_0|>1$ and $h^*=\epsilon^2$ for $0< |u_0|< 1$ respectively. Hence, $\inf_{u_0\in \mathbb{R}}h^*(u_0,\epsilon)=0$.
\end{thm}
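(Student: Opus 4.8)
The plan is to follow the same structure as the proof of Theorem \ref{thm:EE}, adapting the auxiliary-function argument to the implicit nature of the CN scheme. For part (i), when $u_0\in\{0,\pm1\}$ one checks directly that $u_n=\mathrm{sign}(u_0)$ is consistent with \eqref{eq:CN}: substituting $u_{n-1}=\mathrm{sign}(u_0)$ forces $u_n^3-u_n + (u_n - u_{n-1})\tfrac{2\epsilon^2}{h}=0$, and by the unique solvability from Lemma \ref{lem:CN1} (valid for $h\le 2\epsilon^2$) the unique root is $u_n=\mathrm{sign}(u_0)$. So the work is entirely in part (ii).

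For part (ii), I would treat the case $u_0>1$ first (the case $u_0<-1$ is symmetric, and $0<|u_0|<1$ is analogous with reversed monotonicity). Rewrite \eqref{eq:CN} as $u_n + \tfrac{h}{2\epsilon^2}(u_n^3-u_n) = u_{n-1} - \tfrac{h}{2\epsilon^2}(u_{n-1}^3-u_{n-1})$ and introduce, for a fixed $\gamma>1$, the auxiliary function
\begin{equation}\label{eq:CNaux}
\psi(x) = \frac{h}{2\epsilon^2}x^3 + \Big(1-\frac{h}{2\epsilon^2}\Big)x - \gamma + \frac{h}{2\epsilon^2}(\gamma^3-\gamma).
\end{equation}
When $h\le 2\epsilon^2$ we have $1-\tfrac{h}{2\epsilon^2}\ge 0$, so $\psi'(x)>0$ for $x\ne 0$ and $\psi$ is strictly increasing on $(0,\infty)$; moreover $\psi(x)\to+\infty$ as $x\to+\infty$. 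The key inequality is $\psi(1)\le 0$: a short computation gives $\psi(1) = \tfrac{h}{2\epsilon^2}(\gamma^3-\gamma) - (\gamma-1) = (\gamma-1)\big[\tfrac{h}{2\epsilon^2}\gamma(\gamma+1) - 1\big]$, which is $\le 0$ precisely when $h\le \tfrac{2\epsilon^2}{\gamma^2+\gamma}$. Hence, if $h\le \tfrac{2\epsilon^2}{u_{n-1}^2+u_{n-1}}$ then the unique root $u_n$ of $\psi$ (with $\gamma=u_{n-1}$) satisfies $u_n\ge 1$. From \eqref{eq:CN}, since $u_{n-1}>1\Rightarrow u_{n-1}^3-u_{n-1}>0$ and likewise the $u_n$ term is $\ge 0$, we get $u_n-u_{n-1}\le 0$, so the sequence is non-increasing; because $u_n\ge 1$ the threshold $\tfrac{2\epsilon^2}{u_n^2+u_n}$ is non-decreasing in $n$, so the single condition $h\le \tfrac{2\epsilon^2}{u_0^2+u_0}=:h^*$ propagates the bound $u_n\ge 1$ for all $n$. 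A monotone bounded sequence converges; passing to the limit in \eqref{eq:CN} forces the limit to be an equilibrium $\ge 1$, i.e. $1=\mathrm{sign}(u_0)$. For $0<u_0<1$ the same auxiliary function with $0<\gamma<1$ gives $\psi(0)<0$ and $\psi(1)\ge 0$ whenever $h\le \tfrac{2\epsilon^2}{\gamma^2+\gamma}$; since $\gamma^2+\gamma<2$ there, the uniform choice $h^*=\epsilon^2$ works, yielding $0<u_n\le 1$, an increasing sequence, and convergence to $1$.

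The final claim $\inf_{u_0\in\mathbb{R}}h^*(u_0,\epsilon)=0$ is then immediate: for $|u_0|>1$ we have $h^*(u_0,\epsilon)=\tfrac{2\epsilon^2}{u_0^2+|u_0|}\to 0$ as $|u_0|\to\infty$, so the infimum over all $u_0$ is $0$. I expect the main obstacle to be the bookkeeping in part (ii): one must verify that the per-step solvability bound $h\le \tfrac{2\epsilon^2}{u_{n-1}^2+u_{n-1}}$ is implied by the fixed bound $h\le h^*=\tfrac{2\epsilon^2}{u_0^2+u_0}$, which relies on the monotone decrease of $u_n$ — this is a small induction, but it is the point where the implicit structure (the root $u_n$ depends on $u_{n-1}$ through both sides of the equation) has to be handled carefully, together with checking that the numerical solution genuinely never crosses the equilibrium $u^*=1$ so that the sign of $u_{n-1}^3-u_{n-1}$ stays fixed.
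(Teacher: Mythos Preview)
Your proposal is correct and follows essentially the same approach as the paper: the same auxiliary function \eqref{eq:CNaux}, the same evaluation at $x=1$ to obtain the per-step threshold $h\le \tfrac{2\epsilon^2}{\gamma^2+\gamma}$, and the same monotone-bounded-sequence argument to conclude convergence to $\mathrm{sign}(u_0)$. Your explicit factorization $\psi(1)=(\gamma-1)\big[\tfrac{h}{2\epsilon^2}\gamma(\gamma+1)-1\big]$ and the remark on how the single bound $h\le h^*$ propagates by induction are in fact slightly more detailed than the paper's presentation, but the underlying strategy is identical.
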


\begin{proof}

Similar to Theorem \eqref{thm:EE}, it is easy to obtain that equation \eqref{eq:CN}  has the same equilibrium points as the continuous equation \eqref{1.1}, i.e., $\pm1$ and $0$.
We only consider that $u_0\geq0$ and the other case $u_0<0$ can be proved in a similar way.
The CN scheme \eqref{eq:CN} can be written in the following form:
\begin{equation}\label{eq:CN2}
\frac{h}{2\epsilon^2}u_n^3+\left( 1-\frac{h}{2\epsilon^2}\right)u_n+\frac{h}{2\epsilon^2}(u_{n-1}^3-u_{n-1})-u_{n-1} =0.
\end{equation}
Next, we consider the monotonicity of numerical solutions.

When $h\leq2\epsilon^2$, CN scheme is uniquely solvable, it is easy to see from \eqref{eq:CN2}  that if $u_0=0$ or $1$, then $u_n = \mathrm{sign}(u_0)$  for all $n\geq1$.

We now analysis the monotonicity of $\{u_n\}$ for $u_0>1$. 
Consider the auxiliary function derived from \eqref{eq:CN2} as
\begin{equation}\label{eq:CN3}
\psi (x)=\frac{h}{2\epsilon^2}x^3+\left( 1-\frac{h}{2\epsilon^2}\right)x+\frac{h}{2\epsilon^2} (\gamma^3-\gamma)-\gamma,
\end{equation}
where $\gamma>1$. Assume $h\in\left(0, \frac{2\epsilon^2}{\gamma^2+\gamma}\right]$, it is easy to see that $\psi'(x)>0$. Moreover, we observe that $\psi (1)\leq 0$ and $\psi(x)\rightarrow+\infty $ as $x\rightarrow+\infty $, then there exists a unique $x^*\geq1$ such that $\psi(x^*)=0$. This implies that when $h\in\left(0, \frac{2\epsilon^2}{u_{n-1}^2+u_{n-1}}\right]$, then $u_n\geq1$ for all $n\geq1$.
It follows from \eqref{eq:CN}
$u_n-u_{n-1}=-\frac{h}{2 \epsilon^2}\left[ \left(u_n^3-u_n\right)+ \left(u_{n-1}^3-u_{n-1}\right)\right]\leq0$  and 
$\frac{2\epsilon^2}{u_{n-1}^2+u_{n-1}}\leq \frac{2\epsilon^2}{u_{n}^2+u_{n}}.$
Therefore, when $0<h\leq \min\limits_{n\in \mathbf{N}}\left\{\frac{2\epsilon^2}{u_n^2+u_n}\right\}=\frac{2\epsilon^2}{u_0^2+u_0}$, $\{u_n\}$ is monotonically decreasing and has a lower bound $1$ for all $n\geq1$. Now, taking the limit with respect to $n$ in \eqref{eq:CN} and combining it with the condition that $u_n\geq1$ for all $n\geq1$, we can get that when $h\in\left(0, \frac{2\epsilon^2}{u_0^2+u_0}\right]$, $\{u_n\}$ monotonically decreases and converges to $u^*=1=\mathrm{sign} \left(u_0\right)$ as $n \rightarrow \infty$.

Thirdly, consider that $0<u_0<1$. Taking the auxiliary function \eqref{eq:CN3} with $0<\gamma<1$, we can deduce that when \( h \in \left(0, \frac{2\epsilon^2}{u_{n-1}^2 + u_{n-1}}\right] \),  \( 0 < u_n \leq 1 \) for all \( n \geq 1 \).  This and equation \eqref{eq:CN} imply that
$u_n-u_{n-1}=-\frac{h}{2 \epsilon^2}\left[ \left(u_n^3-u_n\right)+ \left(u_{n-1}^3-u_{n-1}\right)\right]\geq0$.
 Consequently, we have
$
\frac{2\epsilon^2}{u_{n-1}^2+u_{n-1}}\geq \frac{2\epsilon^2}{u_{n}^2+u_{n}}.
$
Therefore, when $0<h\leq \inf\limits_{n\in \mathbf{N}}\left\{\frac{2\epsilon^2}{u_n^2+u_n}\right\}=\epsilon^2$,
$\{u_n\}$ is monotonically increasing and has a upper bound $1$. So 
$u_n\to u^*=1=\mathrm{sign} \left(u_0\right)$ as $n \rightarrow \infty$.
\end{proof}

\begin{rmk}
The result $\inf_{u_0\in \mathbb{R}}h^*(u_0,\epsilon)=0$ given in Theorem \ref{thm:CN} provides a reasonable explanation for Lemma \ref{lem:CN1}.
This explains why for any $h>0$, we can find $u_0$ that causes the numerical solution to converge to the wrong equilibrium state.
On the other hand, when $0<|u_0|<1$, we can observe that the maximum time step allowed to maintain the monotonicity of numerical solutions is $\epsilon^2$,
%the upper bound of the time step we obtained is $\epsilon^2$,
which is smaller than the maximum time step allowed by unique solvability of $2\epsilon^2$.
%which is less than the upper bound $2\epsilon^2$ given by the conditions for unique solvability. 
Therefore,  the maximum time step allowed by unique solvability is too large and it may not be sufficient to guarantee the accuracy of the numerical approximation. When $h\in(\epsilon^2,2\epsilon^2)$, which can disrupt the monotonicity of the numerical solution, leading to oscillatory behavior.  
\end{rmk}

%The reference \cite{xu2023lack} demonstrates that the Crank-Nicolson (CN) scheme is energy-stable with respect to a modified energy within the framework of energy minimization. 

It is proved in \cite{xu2023lack} that the Crank-Nicolson (CN) scheme is energy-stable with respect to a modified energy in the framework of energy minimization. 
We now show that  the numerical solution $\{u_n\}$ derived under the conditions of Theorem \ref{thm:CN} can maintain the stability of the original energy.

\begin{thm}\label{thm:CN2}
Given an initial condition $u_0$, when $h\in (0,h^*]$ and $h^*$ is given in Theorem \ref{thm:CN}, the numerical solution $u_n$ obtained by CN scheme $\eqref{eq:CN}$ maintain the original energy stability, namely $E(u_n)\leq E(u_{n-1})$.
%\begin{eqnarray}\label{eq:es-CN}
%E(u_n)\leq E(u_{n-1}).
%\end{eqnarray}
\end{thm}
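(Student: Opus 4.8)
The plan is to mimic the energy-stability argument already carried out for the explicit Euler scheme in Theorem \ref{thm:EE2}, adapting it to the Crank-Nicolson discretization. First I would multiply the CN scheme \eqref{eq:CN} by a suitable quadrature of the derivative of $F$ at the two time levels; the natural choice that produces a telescoping energy difference is $\tfrac{h}{4\epsilon^2}(u_n^3+u_{n-1}^3)$, paralleling the factor used in the EE proof. Using the algebraic identity $(a^4-b^4) = (a^2+b^2)(a^2-b^2)$ together with $(a^3+b^3)$-type cross terms, this gives an exact discrete energy balance of the form
\begin{equation}\label{eq:CN-energy-balance}
E(u_n)-E(u_{n-1}) + R_n = 0,
\end{equation}
where $R_n$ collects the remaining quadratic-in-increment terms plus the contribution of the scheme's nonlinear part; here $E(v)=\frac{1}{4\epsilon^2}(v^2-1)^2$ as before.

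Next I would simplify $R_n$ by substituting the increment relation $u_n-u_{n-1} = -\frac{h}{2\epsilon^2}\big[(u_n^3-u_n)+(u_{n-1}^3-u_{n-1})\big]$ coming directly from \eqref{eq:CN}. After this substitution and factoring, I expect $R_n$ to reduce to a product of the form (a positive constant times $h/\epsilon^4$) times factors such as $(u_n+u_{n-1})$, $(u_n^2+u_{n-1}^2-2)$ or $(u_n^3-u_n)+(u_{n-1}^3-u_{n-1})$, i.e., exactly the kind of sign-definite expression that appeared in \eqref{eq7:EE} for the EE case. The point is that the residual term's sign is governed entirely by whether the iterates $\{u_n\}$ stay on one side of the equilibria.

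Then the sign analysis is the step where Theorem \ref{thm:CN} does the work: since $h\in(0,h^*]$, that theorem guarantees $\{u_n\}$ is monotone and never crosses $u^*=1$ (in the case $u_0>0$; symmetrically $u^*=-1$ for $u_0<0$, and the cases $u_0\in\{0,\pm1\}$ are trivial since $u_n$ is constant). Hence $u_n$ and $u_{n-1}$ have the same sign and lie on the same side of $1$ (or of $-1$), which fixes the signs of each factor in $R_n$ and yields $R_n\ge 0$. Combined with \eqref{eq:CN-energy-balance}, this gives $E(u_n)\le E(u_{n-1})$, as claimed.

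The main obstacle I anticipate is purely algebraic bookkeeping: after multiplying \eqref{eq:CN} by $\tfrac{h}{4\epsilon^2}(u_n^3+u_{n-1}^3)$ the cross terms $u_nu_{n-1}^3-u_n^3u_{n-1}$ must be converted into energy-difference form and the leftover pieces regrouped cleanly, and one must verify that every surviving factor in $R_n$ is indeed sign-definite under the monotonicity/no-crossing conclusion of Theorem \ref{thm:CN} rather than merely plausibly so. I would handle the boundary case $0<|u_0|<1$ separately from $|u_0|>1$, since the direction of monotonicity (and hence the sign of $u_n^2+u_{n-1}^2-2$) differs, but in both regimes the product structure forces $R_n\ge0$. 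If a stray term of indefinite sign appears, the fallback is to choose a slightly different multiplier (e.g.\ a convex combination of $u_n^3+u_{n-1}^3$ and the averaged nonlinearity) tuned so that the CN increment relation absorbs it.
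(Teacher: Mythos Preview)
Your plan is correct and will go through: multiplying \eqref{eq:CN} by $\tfrac{h}{4\epsilon^2}(u_n^3+u_{n-1}^3)$, rewriting the quartic terms via $E$, and then substituting the CN increment relation indeed produces the exact balance
\[
E(u_n)-E(u_{n-1})+\frac{h}{8\epsilon^4}\big[(u_n^3-u_n)+(u_{n-1}^3-u_{n-1})\big](u_n+u_{n-1})(u_n^2+u_{n-1}^2-2)=0,
\]
and each factor of the residual has a definite sign once Theorem~\ref{thm:CN} pins the iterates to one side of the equilibrium, in either regime $|u_0|>1$ or $0<|u_0|<1$. So your anticipated algebraic bookkeeping works out cleanly and no fallback multiplier is needed.

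The paper, however, takes a shorter route: it multiplies \eqref{eq:CN} by $\tfrac12(u_n-u_{n-1})$ rather than by $\tfrac{h}{4\epsilon^2}(u_n^3+u_{n-1}^3)$. This immediately produces the nonnegative term $\tfrac{(u_n-u_{n-1})^2}{2h}$, and the remaining nonlinear part rewrites directly as $E(u_n)-E(u_{n-1})+\tfrac{1}{4\epsilon^2}(u_n^2-u_{n-1}^2)(1-u_nu_{n-1})$, with no need to substitute the increment relation back in. The sign of the single cross term $(u_n^2-u_{n-1}^2)(1-u_nu_{n-1})$ is then read off from the monotonicity and no-crossing conclusion of Theorem~\ref{thm:CN}, exactly as you intend to do. Thus the two approaches share the same structural idea (energy identity plus a residual whose sign is fixed by Theorem~\ref{thm:CN}); the paper's multiplier is the standard ``test against the discrete time derivative'' choice, which buys a shorter computation and avoids one substitution step, while yours more faithfully mirrors the explicit Euler argument of Theorem~\ref{thm:EE2} but is slightly longer.
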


\begin{proof}
Multiplying $\eqref{eq:CN}$ by $\frac{1}{2}(u_n-u_{n-1})$ gives that
\begin{eqnarray}\label{eq:es-CN1}
\frac{(u_n-u_{n-1})^2}{2h}+\frac{1}{4\epsilon^2}\left[(u_n^3-u_n)(u_n-u_{n-1})+(u_{n-1}^3-u_{n-1})(u_n-u_{n-1})\right]=0,
\end{eqnarray}
which shows $\frac{1}{4\epsilon^2}\left[(u_n^3-u_n)(u_n-u_{n-1})+(u_{n-1}^3-u_{n-1})(u_n-u_{n-1})\right]\leq0$,
%\begin{eqnarray}\label{eq:es-CN2}
%\frac{1}{4\epsilon^2}\left[(u_n^3-u_n)(u_n-u_{n-1})+(u_{n-1}^3-u_{n-1})(u_n-u_{n-1})\right]\leq0,
%\end{eqnarray}
namely,
\begin{eqnarray}\label{eq:es-CN3}
E(u_n)- E(u_{n-1})+\frac{1}{4\epsilon^2}(u_n^2-u_{n-1}^2)(1-u_nu_{n-1})\leq0,
\end{eqnarray}
where $E(v)=\frac{1}{4\epsilon^2}(v^2-1)^2$. 
%Since $h\in (0,h^*]$, 
It follows from Theorem \ref{thm:CN} that for $h\in (0,h^*]$, $\{u_n\}$ maintains monotonicity and does not cross the equilibrium state. Hence, we have
\begin{eqnarray}\label{eq:es-CN4}
\frac{1}{4\epsilon^2}(u_n^2-u_{n-1}^2)(1-u_nu_{n-1})\geq 0.
\end{eqnarray}
Therefore, it follows  from \eqref{eq:es-CN3} that $E(u_n)\leq E(u_{n-1})$.
%\begin{eqnarray}\label{eq:es-CN5}
%E(u_n)\leq E(u_{n-1}).
%\end{eqnarray}
\end{proof}

\subsection{Modified Crank-Nicolson  scheme}

The second-order modified Crank-Nicolson   (modCN) scheme  for the ODE $(\ref{1.1})$ is given by

\begin{equation}
\frac{u_n-u_{n-1}}{h}+\frac{1}{\epsilon^2} \tilde{f}\left(u_n, u_{n-1}\right)=0, \quad n\geq 1,
\end{equation}
where
\begin{equation}
\tilde{f}\left(u, u_{n-1}\right)= \begin{cases}\frac{F(u)-F\left(u_{n-1}\right)}{u-u_{n-1}}, & u \neq u_{n-1}, \\ u^3-u, & u=u_{n-1} .\end{cases}
\end{equation}
Writing out $\tilde{f}\left(u, u_{n-1}\right)$, we obtain the following equivalent form:

\begin{equation}\label{eq:modCN1}
\frac{u_n-u_{n-1}}{h}+\frac{1}{\epsilon^2} \frac{u_n+u_{n-1}}{2}\left(\frac{\left(u_n\right)^2+\left(u_{n-1}\right)^2}{2}-1\right)=0 , \quad n\geq 1.
\end{equation}

\begin{lem}\label{lem:modCN1}
%(\cite{xu2023lack, condette2011spectral}) 
When $h \leq 2 \epsilon^2$, the modCN \eqref{eq:modCN1} scheme is uniquely solvable \cite{condette2011spectral}.  
When $h \leq 2 \epsilon^2$, it is also unconditionally energy-stable, i.e., $E(u_n)\leq E(u_n)+\frac{\left(u_n-u_{n-1}\right)^2}{h}=:E(u_{n-1}).$
%\begin{equation}\label{eq:modCN2}
%E(u_n)\leq E(u_n)+\frac{\left(u_n-u_{n-1}\right)^2}{h}=:E(u_{n-1}).
%\end{equation}
However, for any step size $h>0$, there always exists an initial condition $u_0$ with $\left|u_0\right|>1$,
such that the modCN scheme converges to the wrong steady state solution \cite{xu2023lack}.
%there exists an initial condition $u_0$, with $\left|u_0\right|>1$,
%such that the modCN scheme converges to the wrong steady state solution.
 \end{lem}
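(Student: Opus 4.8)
The statement bundles three facts; I would establish them in the order \emph{unique solvability}, \emph{energy stability}, \emph{convergence to a wrong steady state}, each along the lines already used for the Euler schemes in Section~\ref{sec:firstorder}. For unique solvability the plan is to freeze $u_{n-1}=a$ and read \eqref{eq:modCN1} as $G(x)=0$ for $x=u_n$, where
\[
G(x)=\frac{x-a}{h}+\frac{1}{4\epsilon^2}(x+a)\big(x^2+a^2-2\big).
\]
Existence of a root is immediate since $G$ is a cubic with positive leading coefficient, so $G(x)\to\pm\infty$ as $x\to\pm\infty$. For uniqueness I would show $G'(x)>0$: differentiation gives $G'(x)=\frac1h+\frac{1}{4\epsilon^2}(3x^2+2ax+a^2-2)$, and completing the square in $x$ yields $3x^2+2ax+a^2-2\ge\frac23a^2-2\ge-2$, hence $G'(x)\ge\frac1h-\frac1{2\epsilon^2}\ge0$ as soon as $h\le2\epsilon^2$ (strictly, away from at most one point). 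Thus $G$ is strictly increasing and the root is unique, recovering \cite{condette2011spectral}.

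For the energy identity I would multiply the scheme by $u_n-u_{n-1}$ and use the secant identity $\tilde f(u_n,u_{n-1})(u_n-u_{n-1})=F(u_n)-F(u_{n-1})$ (which also holds trivially when $u_n=u_{n-1}$) to get the exact relation
\[
E(u_n)+\frac{(u_n-u_{n-1})^2}{h}=E(u_{n-1}),\qquad E(v)=\frac1{\epsilon^2}F(v),
\]
so $E(u_n)\le E(u_{n-1})$; the hypothesis $h\le2\epsilon^2$ enters only to make $u_n$ well defined, the identity itself being step-size independent.

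For the wrong-equilibrium assertion I would use the reverse-solving trick of Lemma~\ref{lem:EE}: look for $u_0>1$ producing $u_1=-1$. Substituting $u_n=-1$, $u_{n-1}=u_0$ into \eqref{eq:modCN1} and factoring should give $(u_0+1)\big((u_0-1)^2/(4\epsilon^2)-1/h\big)=0$, whose admissible branch is $u_0=1+2\epsilon/\sqrt{h}>1$, so $\mathrm{sign}(u_0)=1$ and $u_0\notin\{0,\pm1\}$. A second substitution with $u_{n-1}=-1$ gives $(u_n+1)\big(1/h+(u_n-1)^2/(4\epsilon^2)\big)=0$, which forces $u_n=-1$ for all $n\ge1$; hence $u_n\to-1\neq\mathrm{sign}(u_0)$ for this (arbitrarily small) $h$, as in \cite{xu2023lack}.

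I expect the only genuine work to be the sign analysis of $G'$ — the cross term $2ax$ has to be absorbed by completing the square rather than discarded, which is what pins the threshold at exactly $h=2\epsilon^2$ — together with the bookkeeping that $u_0=1+2\epsilon/\sqrt h$ is indeed the relevant preimage of $-1$ and that, once $u_1=-1$, the next iterate is \emph{uniquely} $-1$ (the bracket $1/h+(u_n-1)^2/(4\epsilon^2)$ never vanishes), so the exhibited trajectory is legitimate even in the regime $h>2\epsilon^2$ where the one-step map may be multivalued. Everything else is routine algebra of the type already displayed in Sections~\ref{sec:firstorder}.
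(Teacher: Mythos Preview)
Your proposal is correct, and in fact goes beyond what the paper does: the paper does not prove Lemma~\ref{lem:modCN1} at all but simply quotes the three assertions with citations to \cite{condette2011spectral} and \cite{xu2023lack}. Your argument supplies a clean self-contained proof in the spirit of the surrounding sections---the $G'$ computation with completing the square is the standard route to the $2\epsilon^2$ threshold, the secant identity is exactly why modCN was introduced, and your reverse-solving construction mirrors Lemma~\ref{lem:EE} precisely, with the nice bonus that the factorisation $(u_n+1)\bigl(1/h+(u_n-1)^2/(4\epsilon^2)\bigr)=0$ makes the fixed-point property of $-1$ unconditional in $h$. There is nothing to correct.
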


\begin{thm}
      Given an initial condition $u_0$, the modCN scheme \eqref{eq:modCN1}  for the
      ODE $(\ref{1.1})$ has the following properties.

(i) If $u_0\in \{0,\pm1\}$, when $h\leq 2\epsilon^2$, then $u_n = \operatorname{sign}(u_0)$  for all $n\geq1.$

(ii) If $u_0\notin \{0,\pm1\}$, then there exists a constant $h^*=h^*(u_0, \epsilon)>0$ such that for any $h\in(0, h^*]$, $\{u_n\}$ monotonically converge to correct equilibrium state $u^*=\mathrm{sign}(u_0) $ as $n\rightarrow\infty$. Specifically, we have $h^*=\frac{4\epsilon^2}{u_0^2+2|u_0|+1}$ for $|u_0|>1$ and $h^*=\epsilon^2$ for $0< |u_0|< 1$ respectively.
\end{thm}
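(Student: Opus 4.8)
The plan is to mirror the proof of Theorem \ref{thm:CN} almost verbatim, since the modified Crank-Nicolson scheme \eqref{eq:modCN1} has exactly the same structure: it is a first-order nonlinear difference relation in which $u_n$ solves a cubic equation with $u_{n-1}$ as a parameter. First I would record the obvious facts: the equilibrium points of \eqref{eq:modCN1} are the zeros of $\frac{u+u}{2}\big(\frac{2u^2}{2}-1\big)$, i.e. $\pm1$ and $0$, matching \eqref{1.1}; and under $h\le 2\epsilon^2$ the scheme is uniquely solvable by Lemma \ref{lem:modCN1}, from which part (i) is immediate because if $u_{n-1}\in\{0,\pm1\}$ then $u_n=u_{n-1}$ solves \eqref{eq:modCN1}. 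By symmetry it suffices to treat $u_0\ge 0$.

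For part (ii) with $u_0>1$, I would rewrite \eqref{eq:modCN1} in polynomial form, multiplying through by $h$ and grouping: $\frac{h}{4\epsilon^2}u_n^3+\big(1-\frac{h}{2\epsilon^2}+\frac{h}{4\epsilon^2}u_{n-1}^2\big)u_n + (\text{terms in }u_{n-1})=0$, and introduce the auxiliary function $\psi(x)$ obtained by freezing $u_{n-1}=\gamma>1$. The key is to find the sharp threshold on $h$ guaranteeing (a) $\psi$ is increasing, so the cubic has a unique real root, and (b) $\psi(1)\le 0$, so that root lies in $[1,\infty)$, which forces $u_n\ge 1$. Computing $\psi(1)$ should give a linear expression in $h$ whose sign flips exactly at $h=\frac{4\epsilon^2}{\gamma^2+2\gamma+1}=\frac{4\epsilon^2}{(\gamma+1)^2}$; one then checks that on this same range $\psi'(x)>0$ (this uses $h\le 2\epsilon^2$-type bounds plus $\gamma>1$, and since $\frac{4\epsilon^2}{(\gamma+1)^2}<\epsilon^2<2\epsilon^2$ for $\gamma>1$ the solvability hypothesis is automatically satisfied). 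Then, exactly as in Theorem \ref{thm:CN}, \eqref{eq:modCN1} gives $u_n-u_{n-1}=-\frac{h}{\epsilon^2}\tilde f(u_n,u_{n-1})\le 0$ because $\tilde f\ge 0$ on $[1,\infty)$ (it is an average of $F'$ values, all nonnegative there, or directly the factored form is $\ge 0$), so $\{u_n\}$ decreases; since it decreased, $u_n\le u_{n-1}$ makes the threshold $\frac{4\epsilon^2}{(u_n+1)^2}\ge\frac{4\epsilon^2}{(u_{n-1}+1)^2}$, so the constraint $h\le\frac{4\epsilon^2}{(u_0+1)^2}$ propagates to all $n$, keeping $u_n\ge 1$ for every $n$. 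Monotone and bounded below gives a limit, and passing to the limit in \eqref{eq:modCN1} pins it to the only equilibrium $\ge 1$, namely $u^*=1=\mathrm{sign}(u_0)$. This yields $h^*=\frac{4\epsilon^2}{u_0^2+2|u_0|+1}$.

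For the remaining case $0<u_0<1$, I would use the same $\psi$ with $0<\gamma<1$: now $\psi(0)<0$ and $\psi(1)\ge 0$ under the appropriate step restriction, trapping the root in $(0,1]$, hence $0<u_n\le 1$; then $\tilde f(u_n,u_{n-1})\le 0$ gives $u_n-u_{n-1}\ge 0$, so $\{u_n\}$ increases, and $u_n\ge u_{n-1}$ makes $\frac{4\epsilon^2}{(u_n+1)^2}\le\frac{4\epsilon^2}{(u_{n-1}+1)^2}$, so I take the infimum of $\frac{4\epsilon^2}{(u_n+1)^2}$ over $n$; since $u_n\nearrow 1$ this infimum is $\frac{4\epsilon^2}{(1+1)^2}=\epsilon^2$. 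Monotone increasing and bounded above by $1$ forces convergence to $u^*=1$. The main obstacle — really the only delicate point — is verifying that on the claimed $h$-interval the cubic $\psi$ is genuinely monotone (so the root is unique and the "does not cross the equilibrium" reasoning is valid) and that the threshold extracted from $\psi(1)=0$ is exactly $\frac{4\epsilon^2}{(\gamma+1)^2}$ rather than something larger; this is a short but careful computation with the coefficient $1-\frac{h}{2\epsilon^2}+\frac{h}{4\epsilon^2}\gamma^2$, and one must confirm it stays positive (or that $\psi'>0$ regardless) throughout the relevant range. Everything else is a transcription of the Crank-Nicolson argument with $u_n^2+u_{n-1}^2$ replaced by its symmetric counterpart and the bound $u^2+u$ replaced by $\tfrac14(u+1)^2$.
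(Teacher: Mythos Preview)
Your proposal is correct and follows exactly the route the paper intends (the paper's own proof is literally the single sentence that it is ``quite similar to Theorem \ref{thm:CN}''). One small correction to watch when you carry it out: the modCN cubic in $u_n$ carries a nonzero quadratic term $\frac{h}{4\epsilon^2}u_{n-1}\,u_n^2$ that you omitted from the displayed polynomial; this extra term only \emph{helps} the monotonicity of $\psi$ for $\gamma>0$, and the key evaluation $\psi(1)=(\gamma-1)\bigl[\tfrac{h}{4\epsilon^2}(\gamma+1)^2-1\bigr]$ together with the infimum $\tfrac{4\epsilon^2}{(1+1)^2}=\epsilon^2$ for $0<u_0<1$ come out exactly as you claim.
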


\begin{proof}
The main method of proof is quite similar to Theorem \ref{thm:CN}, and we will omit its details here.
%First, we established that the difference equation \eqref{eq:modCN1} has the same equilibrium points $(0,\pm1)$ as the continuous equation \eqref{1.1}.
%
%Next, since the proof is similar, we only discussed the convergence of the numerical solution $u_n$ for the case where $u_0\geq0$, specifically.
%
%If $u_0=0$ or 1, when $h\leq 2\epsilon^2$, since  modCN scheme is uniquely solvable, then the convergence of $u_n$ is trivial.
%
%If $u_0>1$, when $h\leq \frac{4\epsilon^2}{u_0^2+2|u_0|+1}$, then $u_n$ monotonically decreases and converges to $1=\mathrm{sign} \left(u_0\right)$ as $n \rightarrow \infty$.
%
%If $0<u_0<1$, when $h\leq \epsilon^2$, then $u_n$ monotonically increases and converges to $1=\mathrm{sign} \left(u_0\right)$ as $n \rightarrow \infty$.
\end{proof}

\subsection{Implicit midpoint scheme}
The second-order implicit midpoint(IM) scheme for the ODE $(\ref{1.1})$ is given by
\begin{equation}\label{eq:IM}
\frac{u_n-u_{n-1}}{h}+\frac{1}{\epsilon^2}\left[f(\frac{u_n+u_{n-1}}{2})\right]=0, \, n\geq 1.
\end{equation}
 A related fully implicit scheme can be given by the following energy minimization formulation (IM-min)
\begin{equation}\label{eq1:IM}
u_n = \underset{v \in \mathbb{R}}{\arg\min} E_{IM}^n(v).
\end{equation}
where
\begin{equation}\label{eq2:IM}
E_{IM}^n(v)=\frac{(v-u_{n-1})^2}{4h}+\frac{1}{\epsilon^2}F(\frac{v+u_{n-1}}{2}).
\end{equation}
It is easy to see the derivative of the energy \eqref{eq2:IM} gives IM \eqref{eq:IM} and hence the
solution of \eqref{eq1:IM} is also a solution of \eqref{eq:IM}. The converse is not true except that $h\leq 2\epsilon^2$.
Therefore, when $h\leq 2\epsilon^2$, IM is uniquely solvable and is equivalent to IM-min

\begin{lem}\label{lem:IM1}
When  $h \leq 2 \epsilon^2$, there exists an initial value $u_0$ with $\left|u_0\right|>1$ such that the IM scheme converges to the wrong steady state solution.
\end{lem}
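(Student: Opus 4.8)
The plan is to mimic the backward construction used in the proof of Lemma~\ref{lem:EE}: prescribe the value $u_1=-1$ for a hypothetical initial datum $u_0>1$, solve the one-step IM relation in reverse to obtain such a $u_0$, and then observe that the whole trajectory is pinned at $-1$.

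First I would record that $0$, $1$ and $-1$ are fixed points of the IM iteration~\eqref{eq:IM}: substituting $u_{n-1}=c$ and $u_n=c$ with $c\in\{0,\pm1\}$ gives $\frac{c-c}{h}+\frac{1}{\epsilon^2}f(c)=\frac{1}{\epsilon^2}f(c)=0$ since $f(0)=f(\pm1)=0$. In particular, once $u_{n-1}=-1$, the value $u_n=-1$ solves the one-step equation; because $h\le 2\epsilon^2$ guarantees unique solvability of IM (as recalled just above the statement), it is the \emph{only} solution. Hence $u_1=-1$ forces $u_n=-1$ for every $n\ge 1$.

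Next I would produce the initial value. For $v>1$ set
\[
g(v)=\frac{-1-v}{h}+\frac{1}{\epsilon^2}f\!\left(\frac{v-1}{2}\right)
=\frac{-1-v}{h}+\frac{1}{\epsilon^2}\left[\left(\frac{v-1}{2}\right)^{3}-\frac{v-1}{2}\right],
\]
which is exactly the IM one-step residual for $n=1$ with $u_1$ replaced by $-1$ and $u_0$ by $v$. Then $g(1)=-\tfrac{2}{h}<0$, while the cubic term $\tfrac{1}{8\epsilon^2}v^{3}$ dominates the linear ones, so $g(v)\to+\infty$ as $v\to+\infty$. By continuity there is $u_0>1$ with $g(u_0)=0$, i.e. $u_1=-1$ is the (unique) IM iterate from this $u_0$, whence $u_n=-1$ for all $n\ge1$ and $\lim_{n\to\infty}u_n=-1\neq 1=\mathrm{sign}(u_0)$. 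The case $u_0<-1$ is symmetric: prescribe $u_1=1$ and solve $\frac{1-v}{h}+\frac{1}{\epsilon^2}f\!\left(\frac{v+1}{2}\right)=0$ for some $v<-1$, using that this residual equals $\tfrac{2}{h}>0$ at $v=-1$ and tends to $-\infty$ as $v\to-\infty$.

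I do not expect a genuine obstacle; the argument is elementary. The two points that need care are: (i) invoking the $h\le 2\epsilon^2$ unique solvability of IM so that the forward trajectory from the constructed $u_0$ is truly pinned to $-1$ rather than merely admitting $-1$ as one of several candidate iterates; and (ii) checking that the root of $g$ really lies in $(1,\infty)$, which follows from $g(1)<0$ together with $g(v)\to+\infty$. Note that the existence of such a $u_0$ in fact holds for every $h>0$; the hypothesis $h\le 2\epsilon^2$ is used only to make ``the IM scheme'' a well-defined one-step map.
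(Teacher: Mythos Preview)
Your proposal is correct and follows exactly the approach the paper takes: the paper's proof simply refers back to Lemma~\ref{lem:EE}, saying one assumes $u_1=-1$ (or $1$) and solves \eqref{eq:IM} in reverse for $u_0>0$ (or $u_0<0$). Your write-up is in fact more careful than the paper's sketch, since you explicitly verify that the backward root satisfies $u_0>1$ and you correctly invoke the $h\le 2\epsilon^2$ unique-solvability hypothesis to ensure the forward IM trajectory from this $u_0$ is genuinely pinned at $-1$.
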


\begin{proof}
With a similar argument as that in lemma $\ref{lem:EE}$, we can first assume $u_1=-1$ or 1 and then solve $u_0>0$ or $u_0<0$ in reverse from equation \eqref{eq:IM}, which yields the required results.
\end{proof}

\begin{thm}\label{thm1:IM}
      Given an initial condition $u_0$, the IM scheme \eqref{eq:IM}  for the
      ODE $(\ref{1.1})$ has the following properties.

(i) If $u_0\in \{0,\pm1\}$, when $h\leq 2\epsilon^2$, then $u_n = \operatorname{sign}(u_0)$  for all $n\geq1.$

(ii) If $u_0\notin \{0,\pm1\}$, then there exists a constant $h^*=h^*(u_0, \epsilon)>0$ such that for any $h\in(0, h^*]$, $\{u_n\}$ monotonically converge to correct equilibrium state $\mathrm{sign}(u_0) $ as $n\rightarrow\infty$. Specifically, we have $h^*=\frac{8\epsilon^2}{u_0^2+4|u_0|+3}$ for $|u_0|>1$ and $h^*=\epsilon^2$ for $0< |u_0|< 1$ respectively.
\end{thm}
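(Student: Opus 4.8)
The plan is to follow the template of Theorem \ref{thm:CN}, adapting only the algebra to the midpoint discretization of $f$. First I would note that the equilibria of \eqref{eq:IM} solve $f\!\left(\tfrac{u^*+u^*}{2}\right)=f(u^*)=0$, so they coincide with $\{0,\pm1\}$; by the symmetry $u\mapsto -u$ of $f$ it suffices to treat $u_0\ge 0$, and the cases $u_0\in\{0,1\}$ are immediate from \eqref{eq:IM} once we invoke unique solvability for $h\le 2\epsilon^2$ (recorded just before Lemma \ref{lem:IM1}). The engine of the argument is to put one step in polynomial form: with $s=u_n+u_{n-1}$, clearing denominators turns \eqref{eq:IM} into $\tfrac{h}{8\epsilon^2}s^3+\bigl(1-\tfrac{h}{2\epsilon^2}\bigr)s-2u_{n-1}=0$, so, freezing $u_{n-1}=\gamma$ and writing $x=u_n$, the natural analogue of \eqref{eq:CN3} is
\[
\psi(x)=\frac{h}{8\epsilon^2}(x+\gamma)^3+\Bigl(1-\frac{h}{2\epsilon^2}\Bigr)(x+\gamma)-2\gamma .
\]
For $h\le 2\epsilon^2$ one has $\psi'(x)=\tfrac{3h}{8\epsilon^2}(x+\gamma)^2+1-\tfrac{h}{2\epsilon^2}>0$, so $\psi$ has exactly one real zero, which recovers unique solvability along the iteration.

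Next I would run the monotone–decrease analysis for $u_0>1$. The key identity is the factorization
\[
\psi(1)=(\gamma-1)\left[\frac{h}{8\epsilon^2}(\gamma+1)(\gamma+3)-1\right],
\]
so for $\gamma>1$ we get $\psi(1)\le 0$ precisely when $h\le \dfrac{8\epsilon^2}{\gamma^2+4\gamma+3}$; combined with $\psi(x)\to+\infty$ this forces the zero $x^\ast$ into $[1,\infty)$. A second evaluation gives $\psi(\gamma)=\tfrac{h}{\epsilon^2}\gamma(\gamma-1)(\gamma+1)>0$, hence in fact $1\le x^\ast<\gamma$, i.e. $1\le u_n<u_{n-1}$. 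Since $u\mapsto u^2+4u+3$ is increasing on $(0,\infty)$, the per-step thresholds $\tfrac{8\epsilon^2}{u_{n-1}^2+4u_{n-1}+3}$ are non-decreasing along the (decreasing) sequence, so an induction on $n$ — assume $1\le u_{n-1}\le u_0$, deduce $u_n\ge 1$ and $u_n\le u_{n-1}$, iterate — shows that $h\le h^\ast:=\tfrac{8\epsilon^2}{u_0^2+4u_0+3}$ suffices; note $h^\ast\le 2\epsilon^2$ here, so solvability is not an extra constraint. Being decreasing and bounded below by $1$, $u_n\to L\ge 1$, and passing to the limit in \eqref{eq:IM} yields $f(L)=0$, hence $L=1=\mathrm{sign}(u_0)$.

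For $0<u_0<1$ I would argue analogously with $0<\gamma<1$: one checks $\psi(0)=\gamma\bigl(\tfrac{h}{8\epsilon^2}\gamma^2-\tfrac{h}{2\epsilon^2}-1\bigr)<0$ whenever $h\le 2\epsilon^2$, while the same factorization gives $\psi(1)\ge 0$ exactly when $h\le \tfrac{8\epsilon^2}{(\gamma+1)(\gamma+3)}$. Since $(\gamma+1)(\gamma+3)<8$ on $(0,1)$ with infimum $8$ as $\gamma\uparrow 1$ — and $u_{n-1}$ does approach $1$ — the uniform choice $h^\ast=\epsilon^2$ is exactly what forces $\psi(1)\ge 0$ for every admissible $\gamma$, so $0<u_n\le 1$ for all $n$. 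Then $u_n-u_{n-1}=-\tfrac{h}{\epsilon^2}f\!\bigl(\tfrac{u_n+u_{n-1}}{2}\bigr)\ge 0$ because $\tfrac{u_n+u_{n-1}}{2}\in(0,1)$, giving a monotone increasing sequence bounded above by $1$ whose limit $L$ again satisfies $f(L)=0$ with $0<L\le 1$, hence $L=1$. The case $u_0<0$ follows from the reflection $u\mapsto -u$, and collecting the two formulas gives $\inf_{u_0\in\mathbb{R}}h^\ast(u_0,\epsilon)=0$.

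The routine parts are the two polynomial evaluations of $\psi$ at $x=1$ and $x=\gamma$ (and at $x=0$). The genuine content — and the only place care is needed — is (a) extracting the exact closed form $h^\ast=\tfrac{8\epsilon^2}{u_0^2+4|u_0|+3}$ from the factorization of $\psi(1)$, and (b) confirming that the per-step restrictions remain compatible with a single $u_0$-dependent bound, via the monotonicity of $u\mapsto u^2+4u+3$, so that the induction on $n$ closes without circularity. Everything else is a transcription of the Crank–Nicolson proof with $\tfrac{h}{2\epsilon^2}$ replaced by $\tfrac{h}{8\epsilon^2}$ acting on $u_n+u_{n-1}$.
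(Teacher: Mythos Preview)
Your proposal is correct and follows exactly the route the paper intends: the paper's own proof simply says the argument is ``quite similar to Theorem \ref{thm:CN}'' and omits the details, and you have carried out precisely that adaptation, including the key factorization $\psi(1)=(\gamma-1)\bigl[\tfrac{h}{8\epsilon^2}(\gamma+1)(\gamma+3)-1\bigr]$ that produces the claimed thresholds $h^*=\tfrac{8\epsilon^2}{u_0^2+4|u_0|+3}$ and $h^*=\epsilon^2$. The only cosmetic difference is that you verify $u_n<u_{n-1}$ via $\psi(\gamma)>0$, whereas the CN template reads it off the scheme once $u_n,u_{n-1}\ge 1$ (here because $\tfrac{u_n+u_{n-1}}{2}\ge 1$ forces $f\ge 0$); both are equivalent one-line checks.
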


\begin{proof}
The main method of proof is also quite similar to Theorem \ref{thm:CN}, and we will omit its details here.
%First, we established that the difference equation \eqref{eq:IM} has the same equilibrium points $(0,\pm1)$ as the continuous equation \eqref{1.1}.
%
%Next, since the proof is similar, we only discussed the convergence of the numerical solution $u_n$ for the case where $u_0\geq0$, specifically.
%
%If $u_0=0$ or 1, when $h\leq 2\epsilon^2$, since  IM scheme is uniquely solvable, then the convergence of $u_n$ is trivial.
%
%If $u_0>1$, when $h\leq \frac{8\epsilon^2}{u_0^2+4|u_0|+3}$, then $u_n$ monotonically decreases and converges to $1=\mathrm{sign} \left(u_0\right)$ as $n \rightarrow \infty$.
%
%If $0<u_0<1$, when $h\leq \epsilon^2$, then $u_n$ monotonically increases and converges to $1=\mathrm{sign} \left(u_0\right)$ as $n \rightarrow \infty$.
\end{proof}
At present, it is not known in the literature whether the standard IM scheme \eqref{eq:IM} maintains the stability of the original energy.
%It is not known the standard IM scheme \eqref{eq:IM} was  to possess any energy law in theory. 
However, we can show that the numerical solution \( \{u_n\} \) obtained in Theorem \ref{thm1:IM} satisfies original energy stability.

\begin{thm}\label{thm:IM2}
      Given an initial condition $u_0$, when $h\in (0,h^*]$ and $h^*$ is given in Theorem \ref{thm1:IM}, the numerical solution $\{u_n\}$ obtained by IM scheme $\eqref{eq:IM}$  retains the energy stability, namely $E(u_n)\leq E(u_{n-1})$.
%\begin{eqnarray}\label{eq:es-IM}
%E(u_n)\leq E(u_{n-1}).
%\end{eqnarray}
\end{thm}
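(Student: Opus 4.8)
\emph{Proof proposal.} The plan is to test the scheme \eqref{eq:IM} against $u_n-u_{n-1}$, which converts it into an exact identity for the one-step energy increment $E(u_n)-E(u_{n-1})$ carrying a single ``numerical anti-dissipation'' remainder, and then to pin down the sign of that remainder using the monotonicity and uniform bounds on $\{u_n\}$ furnished by Theorem \ref{thm1:IM}. (This parallels the argument for the CN scheme in Theorem \ref{thm:CN2}, but the midpoint evaluation produces a genuinely different remainder term.) The case $u_0\in\{0,\pm1\}$ is immediate since then $u_n\equiv\operatorname{sign}(u_0)$ and $E(u_n)=E(u_{n-1})$; so assume $u_0\notin\{0,\pm1\}$, and by the symmetry $u\mapsto -u$ (which leaves both \eqref{eq:IM} and $E$ invariant) we may take $u_0>0$.

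First I would record the elementary polynomial identity obtained by expanding $F(x)=\tfrac14(x^2-1)^2$ and collecting powers of $u_n\pm u_{n-1}$:
\[
f\!\left(\tfrac{u_n+u_{n-1}}{2}\right)(u_n-u_{n-1})=F(u_n)-F(u_{n-1})-\frac{(u_n+u_{n-1})(u_n-u_{n-1})^3}{8},
\]
the last term being precisely the quartic defect of the midpoint rule. Multiplying \eqref{eq:IM} by $u_n-u_{n-1}$ and using $E(v)=\tfrac1{\epsilon^2}F(v)$ then yields the closed form
\[
E(u_n)-E(u_{n-1})=(u_n-u_{n-1})^2\left(-\frac1h+\frac{u_n^2-u_{n-1}^2}{8\epsilon^2}\right),
\]
so that the claim $E(u_n)\le E(u_{n-1})$ reduces to verifying $h\,(u_n^2-u_{n-1}^2)\le 8\epsilon^2$.

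Next I would dispatch this inequality case by case via Theorem \ref{thm1:IM}. If $u_0>1$, then $\{u_n\}$ decreases monotonically and stays $\ge 1$, so $0<u_n\le u_{n-1}$, hence $u_n^2-u_{n-1}^2\le 0$ and the bracket above is strictly negative; the conclusion follows with no step-size condition beyond that of Theorem \ref{thm1:IM}. If $0<u_0<1$, then $0<u_n\le 1$ for all $n$, so $u_n^2-u_{n-1}^2\le u_n^2\le 1$, while in this regime $h\le h^*=\epsilon^2$; therefore $h\,(u_n^2-u_{n-1}^2)\le\epsilon^2<8\epsilon^2$, and again $E(u_n)\le E(u_{n-1})$.

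The only step that requires care is the derivation of the exact increment formula — specifically, getting the cubic remainder with the correct coefficient $\tfrac18$, since it is exactly this term (which vanishes for a linear test equation) that could a priori break energy decay. Once that identity is established, the sign analysis is routine: for $|u_0|>1$ monotonicity alone does the job, and for $0<|u_0|<1$ the slack is supplied precisely by the bound $h\le\epsilon^2$ already present in Theorem \ref{thm1:IM}, so no additional restriction on $h$ is introduced.
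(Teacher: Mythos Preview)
Your argument is correct and takes a genuinely different route from the paper. The paper exploits the variational formulation \eqref{eq1:IM}--\eqref{eq2:IM}: since $h^*<2\epsilon^2$, the IM scheme coincides with its energy-minimization version, so $E_{IM}^n(u_n)\le E_{IM}^n(u_{n-1})=E(u_{n-1})$; after subtracting $E(u_n)$ the difference $F\!\left(\tfrac{u_n+u_{n-1}}{2}\right)-F(u_n)$ is handled by the Mean Value Theorem, producing a factor $(\xi^3-\xi)(u_{n-1}-u_n)$ whose sign is fixed by the monotonicity and the ``no-crossing'' property from Theorem \ref{thm1:IM}. Your approach instead bypasses the variational structure entirely: testing \eqref{eq:IM} against $u_n-u_{n-1}$ together with the exact quartic identity
\[
f\!\left(\tfrac{u_n+u_{n-1}}{2}\right)(u_n-u_{n-1})=F(u_n)-F(u_{n-1})-\tfrac{(u_n+u_{n-1})(u_n-u_{n-1})^3}{8}
\]
yields the closed formula $E(u_n)-E(u_{n-1})=(u_n-u_{n-1})^2\bigl(-\tfrac1h+\tfrac{u_n^2-u_{n-1}^2}{8\epsilon^2}\bigr)$, after which the sign analysis is immediate from Theorem \ref{thm1:IM} (decreasing for $u_0>1$ makes $u_n^2-u_{n-1}^2\le0$; for $0<u_0<1$ the bound $u_n^2-u_{n-1}^2\le1$ together with $h\le\epsilon^2$ gives ample slack). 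Your argument is more elementary---it does not invoke the equivalence with IM-min or the MVT---and delivers an explicit quantitative expression for the energy increment; the paper's route, on the other hand, leans on the minimization structure in a way that may transfer more readily to the PDE setting.
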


\begin{proof}
Since $h^*< 2\epsilon^2$, the IM \eqref{eq:IM} is equivalent to IM-min \eqref{eq1:IM}, then $\{u_n\}$ obtained from \eqref{eq:IM} is also a global minimizer of \eqref{eq2:IM}. Therefore, we have $E_{IM}^n(u_n)\leq E_{IM}^n(u_{n-1})$. Thus,
\begin{equation}\label{eq3:IM}
\frac{(u_n-u_{n-1})^2}{4h}+\frac{1}{\epsilon^2}F(\frac{u_n+u_{n-1}}{2})\leq E(u_{n-1}),
\end{equation}
which shows
\begin{equation}\label{eq4:IM}
\frac{(u_n-u_{n-1})^2}{4h}+\frac{1}{\epsilon^2}\left(F(\frac{u_n+u_{n-1}}{2})-F(u_n)\right)\leq E(u_{n-1})- E(u_{n}).
\end{equation}
By the Lagrange's Mean Value Theorem, there exists $\xi\in\left(\min\{\frac{u_n+u_{n-1}}{2},u_n\},\max\{\frac{u_n+u_{n-1}}{2},u_n\}\right)$ such that
\begin{equation}\label{eq5:IM}
\frac{(u_n-u_{n-1})^2}{4h}+\frac{1}{\epsilon^2}(\xi^3-\xi)(\frac{u_{n-1}-u_{n}}{2})\leq E(u_{n-1})- E(u_{n}).
\end{equation}
%Since $h\in (0,h^*]$, 
It follow from Theorem \ref{thm1:IM} that for $h\in (0,h^*]$, $\{u_n\}$ maintains monotonicity and does not cross the equilibrium state, which yields that \begin{equation}\label{eq6:IM}
\frac{1}{\epsilon^2}(\xi^3-\xi)(\frac{u_{n-1}-u_{n}}{2})\geq0.
\end{equation}
Therefore, it follows from \eqref{eq5:IM} that  $E(u_{n})\leq E(u_{n-1})$.
%\begin{equation}\label{eq7:IM}
% E(u_{n})\leq E(u_{n-1}).
%\end{equation}
\end{proof}

\subsection{Convex splitting of modCN scheme}

The second-order convex splitting (CS-modCN) scheme based
on the modCN scheme for the ODE $(\ref{1.1})$ is given by

\begin{eqnarray}\label{eq:cs1}
 \frac{u_n-u_{n-1}}{h}+\frac{1}{\epsilon^2}\left(\frac{u_n+u_{n-1}}{2}\right)\left(\frac{u_n^2+u_{n-1}^2}{2}\right)-\frac{1}{\epsilon^2}u_{n-1}=0, \quad n\geq 1.
\end{eqnarray}

\begin{lem}\label{lem1:cs}
(Ref \cite{xu2023lack})
For any $h >0$, the CS-modCN \eqref{eq:cs1} scheme is uniquely solvable and energy-stable.
However, there always exists an initial condition $u_0$ such that the CS-modCN scheme converges to the wrong steady state solution.
\end{lem}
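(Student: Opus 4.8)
The three assertions — unconditional unique solvability, unconditional (modified) energy stability, and the existence of a bad initial value — will be treated in turn, using that \eqref{eq:cs1} is precisely the convex–splitting discretization for $F=F_c-F_e$ with $F_c(u)=\frac14u^4$ and $F_e(u)=\frac12u^2-\frac14$ (both convex), since $\frac14(u_n+u_{n-1})(u_n^2+u_{n-1}^2)=\frac{F_c(u_n)-F_c(u_{n-1})}{u_n-u_{n-1}}$ and the term $-u_{n-1}/\epsilon^2$ is $-F_e'(u_{n-1})/\epsilon^2$. For \emph{unique solvability}, fix $u_{n-1}$ and view \eqref{eq:cs1} as $G(x)=0$ with
\[
G(x)=\frac{x-u_{n-1}}{h}+\frac{1}{4\epsilon^2}(x+u_{n-1})(x^2+u_{n-1}^2)-\frac{u_{n-1}}{\epsilon^2}.
\]
Then $G'(x)=\frac1h+\frac{1}{4\epsilon^2}(3x^2+2u_{n-1}x+u_{n-1}^2)$, and the quadratic in parentheses has discriminant $4u_{n-1}^2-12u_{n-1}^2\le0$, hence is nonnegative; so $G'(x)\ge 1/h>0$ for every $h>0$. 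Being a cubic with positive leading coefficient, $G$ is a strictly increasing bijection of $\mathbb{R}$, yielding a unique $u_n$ with no constraint on $h$. (Equivalently, $G$ is the derivative of the strictly convex map $v\mapsto\frac{(v-u_{n-1})^2}{2h}+\frac{1}{\epsilon^2}F_c(v)-\frac{1}{\epsilon^2}u_{n-1}v$.)

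For \emph{energy stability}, multiply \eqref{eq:cs1} by $u_n-u_{n-1}$. The identity $\frac14(u_n+u_{n-1})(u_n^2+u_{n-1}^2)(u_n-u_{n-1})=\frac14(u_n^4-u_{n-1}^4)=F_c(u_n)-F_c(u_{n-1})$ together with the tangent–line inequality for the convex $F_e$, namely $u_{n-1}(u_n-u_{n-1})=F_e'(u_{n-1})(u_n-u_{n-1})\le F_e(u_n)-F_e(u_{n-1})$, gives
\[
\frac{(u_n-u_{n-1})^2}{h}+E(u_n)-E(u_{n-1})\le0,\qquad E(v)=\frac{1}{\epsilon^2}F(v),
\]
which is the asserted decay for every $h>0$, and in the sharper form $E(u_n)+\frac{(u_n-u_{n-1})^2}{h}\le E(u_{n-1})$.

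For the \emph{wrong steady state}, I imitate the reverse–solving device of Lemma \ref{lem:EE}. Prescribe $u_1=1$ and solve \eqref{eq:cs1} with $n=1$ for $u_0=v$; after multiplying by $h$ this reads $\phi(v)=0$ with
\[
\phi(v)=1-v+\frac{h}{4\epsilon^2}(1+v)(1+v^2)-\frac{h}{\epsilon^2}v.
\]
One checks $\phi(-1)=2+\frac{h}{\epsilon^2}>0$ while $\phi(v)\to-\infty$ as $v\to-\infty$ (the cubic term $\frac{h}{4\epsilon^2}v^3$ dominates), so by the intermediate value theorem there is $u_0<-1$ with $\phi(u_0)=0$. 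Since $v\equiv1$ solves \eqref{eq:cs1} whenever $u_{n-1}=1$, the unconditional unique solvability established above forces $u_n=1$ for all $n\ge1$, whence $u_n\to1\ne-1=\mathrm{sign}(u_0)$. The mirror choice $u_1=-1$ produces $u_0>1$ with $u_n\to-1\ne\mathrm{sign}(u_0)$.

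\textbf{Main obstacle.} None of the steps is deep; the two points that require care are (i) aligning the convex/concave splitting so that the discrete chain–rule identity for $F_c$ and the tangent–line inequality for $F_e$ enter the energy estimate with the right signs, and (ii) noting that the bad–initial–value argument genuinely uses the unconditional solvability — without it one could not conclude that the iteration, once it lands on the spurious equilibrium, stays there. I would therefore prove solvability first and invoke it inside the final step.
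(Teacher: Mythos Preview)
Your proof is correct in all three parts. The paper itself does not supply a proof of this lemma; it simply cites \cite{xu2023lack}. Your argument for unique solvability and for energy stability is the standard convex--splitting computation (and in fact the tangent--line inequality you invoke for $F_e(u)=\tfrac12u^2-\tfrac14$ is an equality up to the extra nonnegative term $\tfrac{1}{2\epsilon^2}(u_n-u_{n-1})^2$, so one even gets the identity $E(u_n)-E(u_{n-1})=-\tfrac{(u_n-u_{n-1})^2}{h}-\tfrac{(u_n-u_{n-1})^2}{2\epsilon^2}$). For the existence of a bad initial value you use exactly the reverse--solving device the paper employs in Lemmas~\ref{lem:EE} and \ref{lem:IM1}: prescribe $u_1=\pm1$, exhibit a sign change in the resulting cubic in $u_0$, and then let the already--established unconditional unique solvability pin the iteration at the spurious equilibrium. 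Your remark that unique solvability must be proved \emph{first} so that it can be quoted in the final step is precisely the right logical ordering.
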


%\begin{lem}\label{lem2:cs}
%(Ref \cite{xu2023lack})
%CS-modCN is unconditionally energy-stable.
%\end{lem}

\begin{thm}
      Given an initial condition $u_0$, the CS-modCN scheme \eqref{eq:cs1}  for the
      ODE $(\ref{1.1})$ has the following properties.

(i) If $u_0\in \{0,\pm1\}$, then $u_n = \mathrm{sign}(u_0)$ for all $n\geq1$ and for any $h$ and $\epsilon$;

(ii) If $u_0\notin \{0,\pm1\}$, then there exists a constant $h^*=h^*(u_0, \epsilon)>0$ such that for any $h\in(0, h^*]$, $\{u_n\}$ monotonically converge to correct equilibrium state $\mathrm{sign}(u_0) $ as $n\rightarrow\infty$. Specifically, we have $h^*=\frac{4\epsilon^2}{u_0^2+2|u_0|-1}$ for $|u_0|>1$ and $h^*=2\epsilon^2$ for $0< |u_0|< 1$ respectively.
\end{thm}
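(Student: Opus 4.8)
The plan is to follow the template of Theorem~\ref{thm:CN}, treating the CS-modCN update as a scalar cubic equation in $u_n$. Setting $u_n=u_{n-1}=u^*$ in \eqref{eq:cs1} gives $\frac{1}{\epsilon^2}u^*(u^{*2}-1)=0$, so the fixed points coincide with those of \eqref{1.1}, namely $0,\pm1$; by the odd symmetry of \eqref{eq:cs1} it suffices to treat $u_0\ge 0$. Writing $\gamma=u_{n-1}$ and clearing denominators, \eqref{eq:cs1} becomes $\psi(u_n)=0$ with
\[
\psi(x)=\frac{h}{4}x^3+\frac{h}{4}\gamma x^2+\Bigl(\epsilon^2+\frac{h}{4}\gamma^2\Bigr)x+\Bigl(\frac{h}{4}\gamma^3-\epsilon^2\gamma-h\gamma\Bigr).
\]
A short computation shows the discriminant of $\psi'$ equals $-\tfrac{h^2}{2}\gamma^2-3h\epsilon^2<0$, so $\psi'>0$ for every $h>0$; hence $\psi$ is strictly increasing and \eqref{eq:cs1} is unconditionally uniquely solvable, recovering the solvability part of Lemma~\ref{lem1:cs}. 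For part~(i), substituting $u_{n-1}\in\{0,\pm1\}$ and invoking uniqueness gives $u_n=\mathrm{sign}(u_0)$, and induction closes it, with no restriction on $h$ or $\epsilon$.

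For part~(ii) with $u_0>1$, the two facts I would extract are the identities
\[
\psi(1)=(\gamma-1)\Bigl[\tfrac{h}{4}(\gamma^2+2\gamma-1)-\epsilon^2\Bigr],\qquad \psi(\gamma)=h\gamma(\gamma^2-1).
\]
When $\gamma>1$ the second identity gives $\psi(\gamma)>0=\psi(u_n)$, so $\psi'>0$ forces $u_n<u_{n-1}$; and if $h\le\frac{4\epsilon^2}{\gamma^2+2\gamma-1}$ the first identity gives $\psi(1)\le0$, so $u_n\ge1$. An induction then shows that, since $\{u_n\}$ is decreasing and bounded below by $1$, once $h\le\frac{4\epsilon^2}{u_0^2+2u_0-1}$ one has $h\le\frac{4\epsilon^2}{u_{n-1}^2+2u_{n-1}-1}$ at every step (the map $\gamma\mapsto(\gamma^2+2\gamma-1)^{-1}$ is decreasing on $(1,\infty)$), so $u_n\ge1$ for all $n$. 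The sequence thus converges to a fixed point in $[1,u_0]$, i.e. to $1=\mathrm{sign}(u_0)$; this yields $h^*=\frac{4\epsilon^2}{u_0^2+2|u_0|-1}$, and letting $u_0\to\infty$ gives $\inf_{u_0}h^*=0$.

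For $0<u_0<1$ the sign bookkeeping is slightly different: now $\gamma-1<0$ and the quadratic $\gamma^2+2\gamma-1$ vanishes at $\gamma=\sqrt2-1$. For $0<\gamma\le\sqrt2-1$ the bracket $\tfrac{h}{4}(\gamma^2+2\gamma-1)-\epsilon^2$ is negative for every $h>0$, while for $\sqrt2-1<\gamma<1$ it is negative precisely when $h\le\frac{4\epsilon^2}{\gamma^2+2\gamma-1}$; since $\gamma^2+2\gamma-1<2$ on $(\sqrt2-1,1)$, in every case $h\le2\epsilon^2$ forces $\psi(1)\ge0$, hence $u_n\le1$. Checking $\psi(0)=\gamma\bigl(\tfrac{h}{4}\gamma^2-\epsilon^2-h\bigr)<0$ gives $u_n>0$, and $\psi(\gamma)=h\gamma(\gamma^2-1)<0=\psi(u_n)$ gives $u_n>u_{n-1}$, so $\{u_n\}$ increases and is bounded above by $1$; taking limits in \eqref{eq:cs1} identifies the limit as $1=\mathrm{sign}(u_0)$, giving $h^*=2\epsilon^2$. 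I expect the only real subtlety to be this sign analysis of $\gamma^2+2\gamma-1$ on $(0,1)$ — verifying that its vanishing at $\sqrt2-1$ creates no obstruction and that $2\epsilon^2$ is the correct uniform threshold — together with the routine but careful induction needed to replace the step-dependent constraint $h\le\frac{4\epsilon^2}{u_{n-1}^2+2u_{n-1}-1}$ by the single condition on $u_0$; everything else parallels Theorem~\ref{thm:CN} almost verbatim.
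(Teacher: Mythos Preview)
Your proposal is correct and follows essentially the same approach as the paper, which simply states that the proof is ``quite similar to Theorem~\ref{thm:CN}'' and omits the details; you have in fact supplied precisely those details, with the explicit factorizations $\psi(1)=(\gamma-1)\bigl[\tfrac{h}{4}(\gamma^2+2\gamma-1)-\epsilon^2\bigr]$ and $\psi(\gamma)=h\gamma(\gamma^2-1)$ making the sign analysis especially transparent. The case split at $\gamma=\sqrt{2}-1$ in the $0<u_0<1$ regime is handled correctly, and your direct use of $\psi(\gamma)$ to obtain monotonicity is a slight streamlining over the paper's pattern in Theorem~\ref{thm:CN} (which first bounds $u_n$ and then reads monotonicity off the scheme), but the underlying argument is the same.
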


\begin{proof}
The main method of proof is also quite similar to Theorem \ref{thm:CN}, and we will omit its details here.
%First, we established that the difference equation \eqref{eq:cs1} has the same equilibrium points $(0,\pm1)$ as the continuous equation \eqref{1.1}.
%
%Next, since the proof is similar, we only discussed the convergence of the numerical solution $u_n$ for the case where $u_0\geq0$, specifically.
%
%If $u_0=0$ or 1, then the convergence of $u_n$ is trivial.
%
%If $u_0>1$, when $h\leq \frac{4\epsilon^2}{u_0^2+2|u_0|-1}$, then $u_n$ monotonically decreases and converges to $1=\mathrm{sign} \left(u_0\right)$ as $n \rightarrow \infty$.
%
%If $0<u_0<1$, when $h\leq 2\epsilon^2$, then $u_n$ monotonically increases and converges to $1=\mathrm{sign} \left(u_0\right)$ as $n \rightarrow \infty$.
\end{proof}

\section{Numerical experiments}
\label{sec:num}

In this section, we conduct numerical experiments to validate the theoretical analysis and compare the numerical performance of different numerical methods.

$\textbf{Test 1.}$  First-order numerical schemes. 
%The parameter $\epsilon$ is set to 0.1, we test two different
%initial conditions $u_0$ = 0.5, 3 and different time step sizes $h$.
We take $\epsilon=0.1$ and $u_0 = 0.5$ or $3$ with different time step sizes $h$.
 The critical value \(h^*\) is defined as the threshold that ensures the numerical solution to be monotonically and unique solvable. For the explicit Euler method, we have \(h^* = \frac{\epsilon^2}{u_0^2 + |u_0|}\) for \(|u_0| > 1\) and \(h^* = \frac{\epsilon^2}{2}\) when \(0 < |u_0| < 1\). In contrast, for the implicit Euler method, we have  \(h^* = \epsilon^2\). 
 
 The numerical results are presented in Figures \ref{Figure1} and \ref{Figure2}.
 We can observe from Figures \ref{Figure1} (left) that when the step size $h=0.005$, which equals to the threshold $h^*=0.005$, the numerical solution $u_{c}$ monotonically converges to the correct equilibrium state. However, when the step size $h=0.092$, which is slightly larger than our theoretical threshold $h^*$, rather than too much, the numerical solution $u_{o}$ exhibits oscillations and intersects with the equilibrium solution, ultimately converging to correct equilibrium state. 
 %But when the step size exceeds the threshold by a larger margin,
 But when the step size $h=0.0015$, which exceeds the theoretical threshold $h^*=0.0008$ by a large amount,
 the numerical solution $u_{w}$ converges to an wrong equilibrium state, as shown in Figure \ref{Figure1} (right). 
 From Figure \ref{Figure2}, we find that as long as the implicit Euler is uniquely solvable, the exact solution can be effectively simulated.

\begin{figure}[H]\label{p1}
	\centering
	\subfloat{\label{Fig-1}
		\centering
		\includegraphics[width=0.45\textwidth]{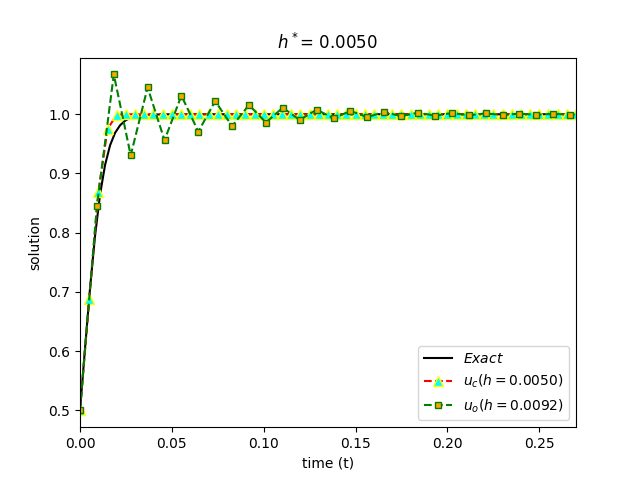}}
	\hspace{0.1cm}
	\subfloat{\label{Fig-2}
		\centering
		\includegraphics[width=0.45\textwidth]{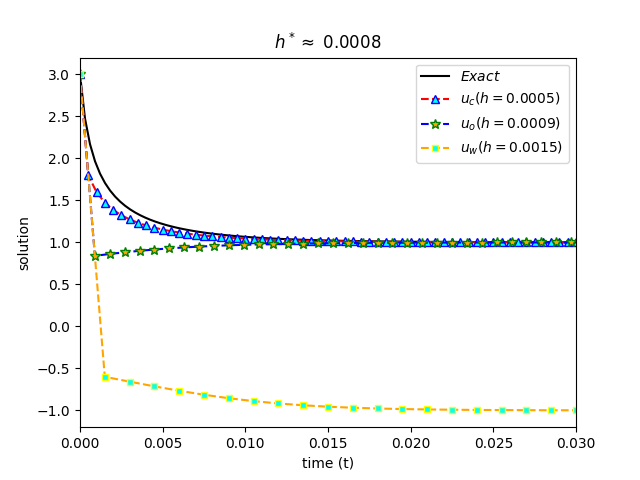}}
	\caption{Evolution of the numerical solution using EE scheme}
    \label{Figure1}
\end{figure}

\begin{figure}[H]\label{p2}
	\centering
	\subfloat{\label{Fig-3}
		\centering
		\includegraphics[width=0.45\textwidth]{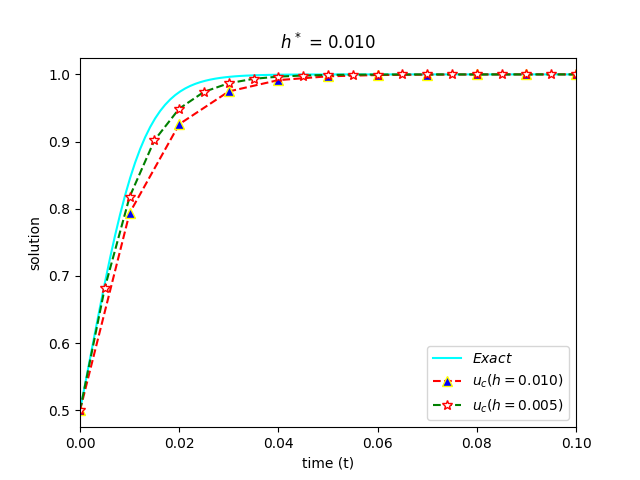}}
	\hspace{0.1cm}
	\subfloat{\label{Fig-4}
		\centering
		\includegraphics[width=0.45\textwidth]{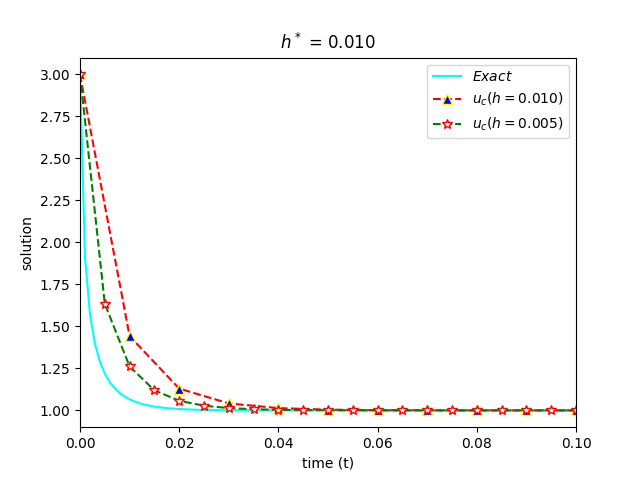}}
	\caption{Evolution of the numerical solution using IE scheme}
    \label{Figure2}
\end{figure}

$\textbf{Test 2.}$  Second-order numerical schemes. 
%The parameter $\epsilon$ is set to 0.1, we test two different
%initial conditions $u_0$ = 0.7, 3 and different time step sizes h, 
We take $\epsilon=0.1$ and $u_0 = 0.7$ or $3$ with different time step sizes $h$.
The theoretical threshold $h^*$ for various specific numerical methods are summarized in Table \ref{table1}.

%\begin{table}[H]
%    \centering
%    \resizebox{1\textwidth}{!}{ 
%        \begin{tabular}{cccccc}
%            \hline
%            Numerical Scheme &  & CN & modCN & CS-modCN & IM \\
%            \hline
%            \multirow{2}{*}{$h^*$} & $|u_0|>1$ &$h\leq\frac{2\epsilon^2}{u_0^2+|u_0|}$&$ h\leq\frac{4\epsilon^2}{u_0^2+2|u_0|+1} $& $h\leq\frac{4\epsilon^2}{u_0^2+2|u_0|-1}$&$h\leq\frac{8\epsilon^2}{u_0^2+4|u_0|+3}$\\
%            \cline{2-6}
%             & $0<|u_0|< 1$ &$h\leq\epsilon^2$&$h\leq\epsilon^2$&$h\leq 2\epsilon^2$&$h\leq\epsilon^2$\\
%            \hline
%        \end{tabular}
%    }
%    \label{table1}
%\end{table}
%
%$\textbf{Observation.}$

  From Figures \ref{Figure3} to \ref{Figure6}, we can see that for various second-order algorithms discussed in this manuscript, when the step size $h$ is less than the corresponding theoretical threshold $h^*$, the numerical solution $u_{c}$ converges monotonically to the correct equilibrium state. When the initial value is less than $1$, and the step size exceeds the theoretical threshold but does not violate the condition for unique solvability, the numerical solution $u_{o}$ exhibits oscillations and intersects with the steady-state solution, eventually converging to the correct equilibrium state, as shown in Figures \ref{Figure3}(left)- \ref{Figure6}(left).  
Furthermore,  when the initial value is greater than $1$, as long as the actual simulation step size $h$ exceeds our theoretical threshold $h^*$, the numerical solution $u_{w}$ either oscillates  or converges to an wrong equilibrium state. 

It is worth noting that in some cases the numerical solutions oscillate only once and then monotonically converges to the correct equilibrium state is because $u_1$ falls within the interval $(0,1)$. At this point, the time step size corresponding to the initial value $u_0=3$ is less than the upper bound of the time step sizes corresponding to initial values $|u_0|<1$. This is shown in Figures \ref{Figure3}(right) - \ref{Figure6}(right).

\begin{figure}[H]
	\centering
	\subfloat{\label{Fig-5}
		\centering
		\includegraphics[width=0.45\textwidth]{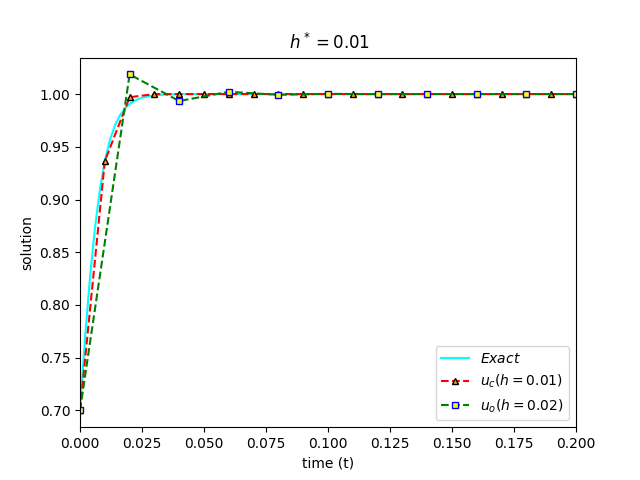}}
	\hspace{0.1cm}
	\subfloat{\label{Fig-6}
		\centering
		\includegraphics[width=0.45\textwidth]{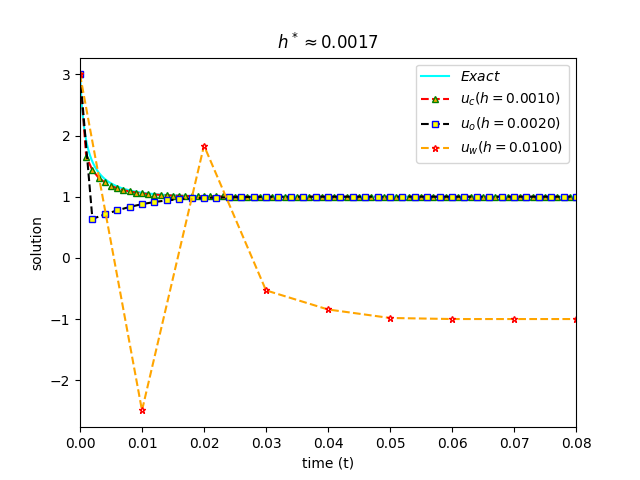}}
	\caption{Evolution of the numerical solution using CN scheme}
    \label{Figure3}
\end{figure}

\begin{figure}[H]
	\centering
	\subfloat{\label{Fig-7}
		\centering
		\includegraphics[width=0.45\textwidth]{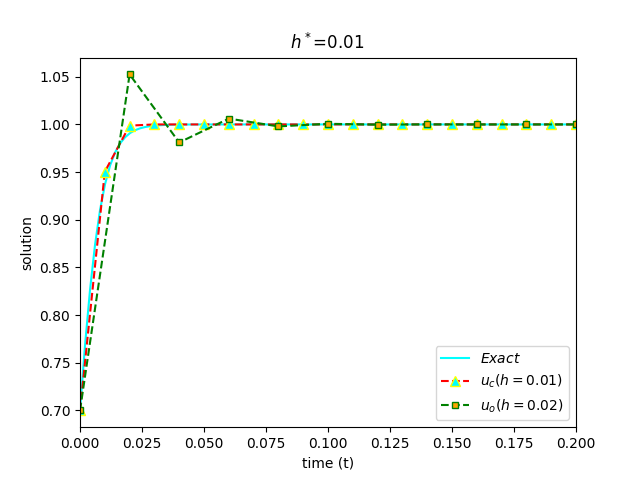}}
	\hspace{0.1cm}
	\subfloat{\label{Fig-8}
		\centering
		\includegraphics[width=0.45\textwidth]{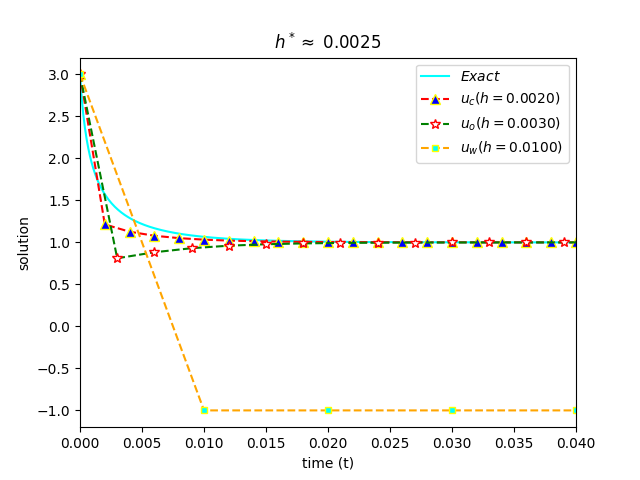}}
	\caption{Evolution of the numerical solution using modCN scheme}
    \label{Figure4}
\end{figure}

\begin{figure}[H]
	\centering
	\subfloat{\label{Fig-9}
		\centering
		\includegraphics[width=0.45\textwidth]{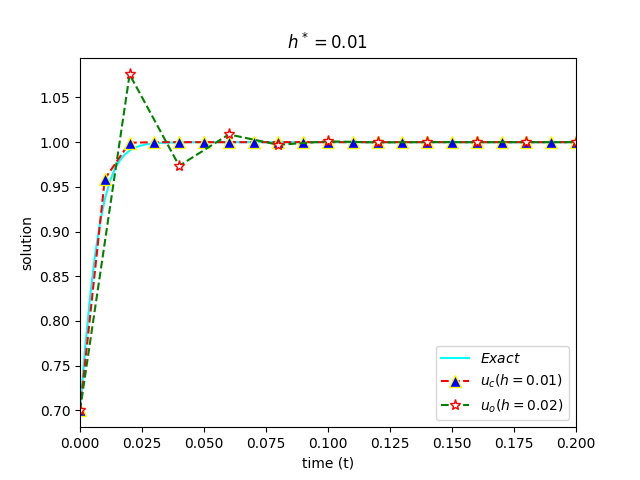}}
	\hspace{0.1cm}
	\subfloat{\label{Fig-10}
		\centering

		\includegraphics[width=0.45\textwidth]{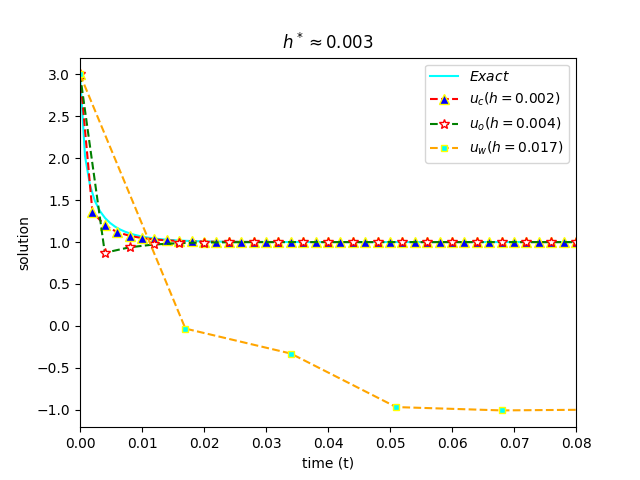}}
	\caption{Evolution of the numerical solution using IM scheme}
    \label{Figure5}
\end{figure}

\begin{figure}[H]
	\centering
	\subfloat{\label{Fig-11}
		\centering
		\includegraphics[width=0.45\textwidth]{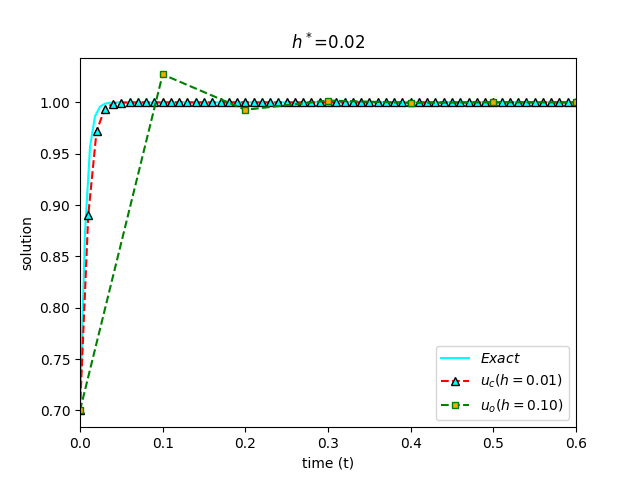}}
	\hspace{0.1cm}
	\subfloat{\label{Fig-12}
		\centering
		\includegraphics[width=0.45\textwidth]{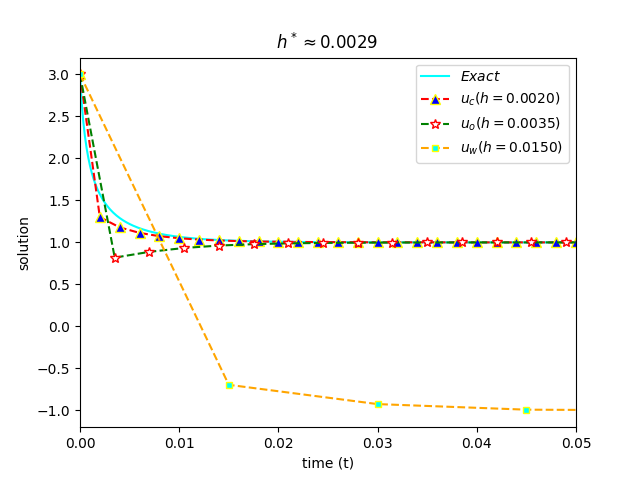}}
	\caption{Evolution of the numerical solution using CS-modCN scheme}
    \label{Figure6}
\end{figure}

\section{Concluding remarks}
\label{sec:con}
In this paper,  we introduce the concept of theoretical threshold $h^*=h^*(u_0, \epsilon)$ on the time steps for several numerical schemes that ensure the numerical solutions are monotonically and unique solvable, which is sufficient to ensure that many common numerical schemes converge to the correct equilibrium state.
The results are summarized and presented in Table \ref{table1}. Our results provide some supplement or explanation to the numerical analysis of the nonlinear phase field model in \cite{xu2023lack} from another perspective. 
In the next, we will consider extending the results of this note to nonlinear ODEs or Allan-Cahn equations.

\begin{table}[H]
    \centering
    \resizebox{1\textwidth}{!}{ 
        \begin{tabular}{cccccccc}
            \hline
            Numerical Scheme &  &  EE & IE & CN & modCN & CS-modCN & IM \\
            \hline
            \multirow{2}{*}{Monotonicity} & $|u_0|>1$ &$h\leq\frac{\epsilon^2}{u_0^2+|u_0|}$ & $h\leq\epsilon^2$&$h\leq\frac{2\epsilon^2}{u_0^2+|u_0|}$&$ h\leq\frac{4\epsilon^2}{u_0^2+2|u_0|+1} $& $h\leq\frac{4\epsilon^2}{u_0^2+2|u_0|-1}$&$h\leq\frac{8\epsilon^2}{u_0^2+4|u_0|+3}$\\
            \cline{2-8}
             & $0<|u_0|< 1$ & $h\leq \frac{\epsilon^2}{2}$ & $h\leq\epsilon^2$ & $h\leq\epsilon^2$ & $h\leq\epsilon^2$ & $h\leq 2\epsilon^2$ & $h\leq\epsilon^2$\\
            \hline
            \multirow{1}{*}{Unique solvability}&  &  $h<+\infty$ & $h\leq \epsilon^2$ & $h\leq 2\epsilon^2$ & $h\leq 2\epsilon^2$ & $h<+\infty$ & $h\leq 2\epsilon^2$\\
            \hline
            \multirow{2}{*}{$h^*$} & $|u_0|>1$ &$\frac{\epsilon^2}{u_0^2+|u_0|}$ & $\epsilon^2$&$\frac{2\epsilon^2}{u_0^2+|u_0|}$&$ \frac{4\epsilon^2}{u_0^2+2|u_0|+1} $& $\frac{4\epsilon^2}{u_0^2+2|u_0|-1}$&$\frac{8\epsilon^2}{u_0^2+4|u_0|+3}$\\
            \cline{2-8}
             & $0<|u_0|< 1$ &$ \frac{\epsilon^2}{2}$&$\epsilon^2$&$\epsilon^2$&$\epsilon^2$&$ 2\epsilon^2$&$\epsilon^2$\\
            \hline
        \end{tabular}
    }
    \caption{The main results of various numerical methods. The conditions for preserving unique solvability of numerical solutions, preserving monotonicity of numerical solutions, and the theoretical thresholds.} \label{table1}
\end{table}

%\section{References}
%\bibliographystyle{plainnat}
\bibliographystyle{alpha}
\bibliography{NCH_ETD2}
\end{document}